\documentclass[reqno,11pt]{amsart}
\usepackage{amsthm}
\usepackage{amsmath,amsfonts,amssymb,bm}
\usepackage{xr}
\usepackage{mathrsfs}
\usepackage{mathtools}
\usepackage{hyperref}
\usepackage{mathabx}
\usepackage[initials,nobysame]{amsrefs}
\usepackage[top=2.0cm,bottom=2.0cm,left=3cm,right=3cm]{geometry}
\usepackage{amsthm}
\usepackage[T1]{fontenc}
\usepackage{tikz}
\usepackage{enumitem}
\usepackage{float}
\usetikzlibrary{arrows, bending}

\newtheorem{theorem}{Theorem}[section]
\newtheorem{lemma}[theorem]{Lemma}

\newtheorem{remark}[theorem]{Remark}
\numberwithin{figure}{section}
\numberwithin{equation}{section}

\mathtoolsset{showonlyrefs}
\makeatletter

\newcommand{\Rmnum}[1]{\expandafter\@slowromancap\romannumeral #1@}
\makeatother

\AtBeginDocument{
   \def\MR#1{}
}

\newcommand\Lpre{ \mathbb{P}_{\scriptscriptstyle\mathcal{L}}}
\newcommand\Loth{ \mathbb{P}^{\perp}_{\scriptscriptstyle\mathcal{L}}}

\newcommand{\mdot}{ \!\cdot\!}
\def\u{ \mathrm{u} }
\def\d{ \mathrm{d} }

\title{Relative entropy and slightly compressible Navier-Stokes dynamics of the Boltzmann equation}
\author[Yuhan Chen]{Yuhan Chen}
\address[Yuhan Chen]
{\newline School of Mathematics and Statistics, Wuhan University, Wuhan 430072, P. R. China}
\email{yhchen\_math@whu.edu.cn}

\author[Ning Jiang]{Ning Jiang}
\address[Ning Jiang]{\newline School of Mathematics and Statistics, Wuhan University, Wuhan 430072, P. R. China}
\email{njiang@whu.edu.cn}
\begin{document}
\begin{abstract}
    This paper shows that, in the formal level, the convergence of solutions of Boltzmann equation to solutions of the  compressible Navier-Stokes system with small Mach number over the three-dimensional periodic domain $\mathbb{T}^3$,
    using the relative entropy method originated from Bardos, Golse, Levermore [{\em Comm. Pure Appl. Math.} {\bf 46} (1993) 667--753] and Yau [{\em Lett. Math. Phys.} {\bf 22} (1991) 63--80]. We discuss the evolution of the entropy which is relative to the local Maxwellian governed by the solution of slightly compressible Navier-Stokes system. This characterizes the convergence rate from Boltzmann equation to the incompressible Navier-Stokes system. \\

    \noindent\textsc{Keywords.}Boltzmann equation; compressible Navier-Stokes; hydrodynamic limit; relative entropy \\

    \noindent\textsc{AMS subject classifications.}  35B25; 35F20; 35Q20; 76N15; 82C40
 \end{abstract}

\maketitle
\thispagestyle{empty}
\pagestyle{plain}
    \section{Introduction}
In this paper, we study, in the formal level, the asymptotic behavior of the following rescaled Boltzmann equation to the compressible Navier-Stokes system with small Mach number. To avoid the difficulty might be brought from the boundary, we consider the periodic domain $\mathbb{T}^3$.
\begin{equation}\label{longtime_Boltzmann} \tag{BE}
    \left\{\begin{aligned}
        &\tau_{\varepsilon} \partial_t f_{\varepsilon} + v\mdot \nabla_x f_{\varepsilon} =\frac{1}{\varepsilon} C(f_{\varepsilon},\ f_{\varepsilon}), \quad x\in \mathbb{T}^3, \quad v \in \mathbb{R}^3\\
        &f_{\varepsilon}(0,x) = f_{\varepsilon}^{\mathrm{in}} \geqslant 0,
    \end{aligned}\right.
\end{equation}
 where $f_{\varepsilon} = f_{\varepsilon}(t,x,v)$ denotes the number density of gas molecules at position $ x\in \mathbb{T}^3$ and time $t \geqslant 0$. Moreover, $\varepsilon$ denotes the Knudsen number which is the ratio of the mean free path and the macroscopic length. $\tau_{\varepsilon}$ denotes the Mach number which is the ratio of the bulk velocity to the sound speed.  The case $\tau_{\varepsilon} =1 $ is corresponding to the short time scale (or Euler time scale), and in this paper we shall focus on the case $\tau_{\varepsilon} = \varepsilon $ which is corresponding to the longer time scale (or Navier-Stokes scale).
The smaller the Knudsen number is, the behavior of the Boltzmann equation is closer to the fluid equations. The limiting process when $\varepsilon$ goes to zero is called hydrodynamic limit.

The operator $ C(f,g)$ is the collision operator which describe the binary elastic collision between particles and is defined as follows
\begin{equation}
    \begin{aligned}
        C( f , g)  = \frac{1}{2}\iint_{\mathbb{T}^3_x \times \mathbb{S}^2} B(v-v_1 , \sigma) \left( g^\prime_1  f^\prime + f^\prime_1  g^\prime -g_1 f -f_1 g   \right) d\sigma dv_1.
    \end{aligned}
\end{equation}
Here and in the sequel we use the notation
\begin{equation}
    f^{\prime}_{1} = f(t,x,v^{\prime}_1), \ f^{\prime} = f(t,x,v^{\prime}),\ f_{1} = f(t,x,v_1),
\end{equation}
and for $\sigma \in \mathbb{S}^2 $,
\begin{equation}
    v^{\prime} = \frac{v+ v_1}{2}+ \frac{|v-v_1|}{2} \sigma,  \ v^{\prime}_1 = \frac{v+ v_1}{2} - \frac{|v-v_1|}{2} \sigma.
\end{equation}
The function $B(v-v_1,\sigma)$, called cross-section, is assumed to depend only on $v-v_1$ and $\frac{v-v_1}{|v-v_1|} \cdot \sigma$.

It is well-known that the equilibrium of the Boltzmann collision operator $C$, i.e. the number density $\mathcal{M}$ such that $C(\mathcal{M}, \mathcal{M})=0$, have a specific form, which is called the Maxwellian distributions:
$$\mathcal{M}(\rho, u ,\theta) :=\frac{\rho}{(2\pi\theta)^{\frac{3}{2}}}\exp(-\frac{|v-\u|^2}{2\theta})\,,$$
where $(\rho,\u,\theta)$ denotes the fluid variables: density, bulk velocity and temperature. When $(\rho,u,\theta)$ are functions of $(x,t)$, $\mathcal{M}(\rho, \u ,\theta)$ is called the local Maxwellians, while when $(\rho,\u,\theta)$ are constant states, $\mathcal{M}(\rho, \u ,\theta)$ is called the global (or absolute) Maxwellians.

In different scales for time and fluctuations relative to Maxwellians, the fluid dynamics of Boltzmann equations are represented by different equations, such as compressible/incompressible Navier-Stokes or Euler equations. As formally derived in \cite{BGL1}, when $\tau_{\varepsilon} = \varepsilon $, this is the case where Mach number and Knudsen number both equals to $\varepsilon$ (then the Reynold number $\mathrm{Re}=O(1)$ from the von Kármán relation $\mathrm{Re}=\frac{\mathrm{Ma}}{\mathrm{Kn}}$ )
 and the incompressible Navier-Stokes limit can be expected. When considering small Mach number, it is natural to consider distributions  as perturbations about a given absolute Maxwellian such as $M = \mathcal{M}(1,0,1)$. More specifically, the solutions to the equation \eqref{longtime_Boltzmann} are sought in the form $f_{\varepsilon} = M + \varepsilon M g_{\varepsilon}$.
Then the equation \eqref{longtime_Boltzmann} can be rewritten as
\begin{equation}\label{perturbed_Boltzmann}
    \left\{ \begin{matrix}
        \varepsilon^2 \partial_t g_{\varepsilon} + \varepsilon v\mdot \nabla_x g_{\varepsilon} + \mathcal{L}g_{\varepsilon} = \varepsilon Q(g_{\varepsilon},g_{\varepsilon}),\\
        g_{\varepsilon}(0,x,v) = g^{{in}}_{\varepsilon},
    \end{matrix}\right.
\end{equation}
with the linearized Boltzmann operator
\begin{equation}
    \mathcal{L}(g) := - \frac{1}{M}(C(M,Mg) + C(Mg,M)),
\end{equation}
and
\begin{equation}
    \begin{aligned}
        &Q(f,g) := \frac{1}{M}C(Mf,Mg).\\
    \end{aligned}
\end{equation}
We shall use the following notations in the sequel:
    \begin{equation}
        \begin{aligned}
            &\langle f,g \rangle = \int_{\mathbb{R}^3_v} f g M\,\d v, \ \ \
            \langle f \rangle = \int_{\mathbb{T}^3} f M \,\d v,\\
            &\langle\! \langle  f  \rangle\! \rangle := \int_{\mathbb{R}^3_v} \int_{\mathbb{R}^3_v}  \int_{\mathbb{S}^2}f M M _1 b \,\d v \d v_1 \d\sigma,
        \end{aligned}
    \end{equation}
    and the standard inner product $({a} \!\  , {b}) = \sum_{i=1}^5 a^ib^i$ in $\mathbb{R}^5$.

    It is well known that the linearized Boltzmann operator $\mathcal{L}$ is self-adjoint on $L^2(Mdv)$ with inner product $\langle f,g \rangle $.
    Moreover, the null space $\mathcal{N}$ of $\mathcal{L}$ is spanned by the set of collision invariants:
    \begin{equation}
        \begin{aligned}
            \mathcal{N} = \text{Span} \{ 1 , v, |v|^2\}.
        \end{aligned}
    \end{equation}
    We also let $\mathcal{N}^{\perp}$ to denote the orthogonal space of $\mathcal{N}$ with respect to the inner product $\langle \cdot, \cdot \rangle$ defined above.
    Furthermore, we use the notation $\Lpre g$ and $\Loth g$ to denote the projection of $g$ to $\mathcal{N}$ and $\mathcal{N}^{\perp}$ respectively.

    Formally, $g_{\varepsilon}$ should converge to some $g$ which lies in the kernel of $\mathcal{L}$. Specifically, $g$ should be of the form: $g = \rho + v \cdot \u  +( \frac{1}{2} |v|^2 -\frac{3}{2})\theta$. Moreover,
    the velocity $u$ is divergence free and the density and the temperature fluctuations $\rho$ and $\theta$ satisfy the Boussinesq relation
    \begin{equation}
        \nabla_x \cdot \u =0; \ \ \rho + \theta = 0.
    \end{equation}
    Furthermore, $(\rho,\u, \theta)$ should satisfy the incompressible Navier-Stokes-Fourier system
    \begin{equation}\label{INSF} \tag{INSF}
        \left\{
            \begin{aligned}
                &\partial_t \u + \u \mdot \nabla_x \u + \nabla_x p = \mu \Delta \u, \\
                &\partial_t \vartheta + \u \mdot \nabla_x \vartheta = \kappa \Delta \vartheta,\\
                &\nabla_x \cdot \u =0,
            \end{aligned}
        \right.
    \end{equation}
    where $\vartheta = \frac{3}{5} \theta - \frac{2}{5}\rho$. The viscosity coefficient $\mu$ and heat-conductivity coefficient $\kappa$ are defined by $\mu = \mu(1,1)$ and $\kappa = \kappa(1,1)$ as in \eqref{local_viscosity_and_heat_conductivity} by setting $(\rho,\theta) = (1,1)$. For more detailed derivation of the above incompressible Navier-Stokes system, see \cite{BGL1}.

    Various works have been contributed to justify this process since late 1970s. Among them, we shall emphasize the so-called BGL program started by Bardos, Golse, and Levermore since the late 1980s. They aimed to justify Leray's solutions to incompressible Navier-Stokes equations from Diperna-Lions renormalized solutions \cites{BGL1,BGL2}.
After that, Bardos, Golse, Levermore, Lions, Saint-Raymond, Masmoudi then made significant contributions (see e.g., \cites{BGL_2000,Golse_Levermore_2002,Lions_Masmoudi_2001}). Finally Golse and Saint-Raymond obtained the first complete convergence result without any additional compactness assumption in \cite{Golse_Raymond_2004} for cutoff Maxwellian collision kernel and in \cite{Golse_Raymond_2005} for hard cutoff potentials. These results were later extended to include soft potentials by Levermore and Masmoudi \cite{Levermore_Masmoudi_2010}, and to bounded domain by Jiang, Levermore, Masmoudi and Saint-Raymond (see e.g., \cites{Jiang_Masmoudi_2017, Jiang_Levermore_Masmoudi_2010,Masmoudi_Raymond_2003}).

Aside from the uss of renormalized solutions of Boltzmann equation to deduce hydrodynamic limits, numerous works have also been based on the classical solution of Boltzmann equation. The first work of this type is that of Bardos and Ukai \cite{Bardos_Ukai_1991}, in which they proved the global existence of classical solutions of \eqref{perturbed_Boltzmann} for cutoff hard potentials, and established estimates uniform in $\varepsilon$ for $0< \varepsilon<1$. Later Briant, Merino-Aceituno, and Mouhot \cites{Briant_2015,Briant_Merino-Aceituno_Mouhot_2019} used the semigroup approach to prove the incompressible Navier-Stokes limit on torus for cutoff kernels with hard potentials.
For a border class of collision kernels for both cutoff and non-cutoff cases, Jiang, Xu and Zhao proved the incompressible Navier-Stokes-Fourier limit in \cite{Jiang_Xu_Zhao_2018}, using the non-isotropic norm developed in \cites{AMUXY_2011,AMUXY_2012,AMUXY_2012_Anal_Appl} and equivalently in \cite{Gressman_Strain}.
There are also many works based on the Hilbert expansion in the context of classical solutions.
This approach was started from Caflisch and Nishida's work on compressible Euler limit \cites{Caflisch_1980,Nishida_1978}. Guo, Jiang, Jang also used this method combining with some nonlinear estimate to derive the  acoustic limit  \cites{Guo_Jang_Jiang_2009,Guo_Jang_Jiang_2010,Jang_Jiang_2009}. Afterwards, this approach was used for incompressible Navier-Stokes limit in \cites{Guo_CPAM_2006,Masi_Esposito_Lebowitz}. It was later extended to general initial data to include fast acoustic waves by Jiang and Xiong in \cite{Jiang_Xiong}.

In \cite{Golse_Levermore_2002}, Golse and Levermore made the following nice observation (in the {\em Concluding Remarks} of \cite{Golse_Levermore_2002}): they proposed the so-called {\em compressible
 Stokes system}, which is the linearization about a homogeneous state of the compressible Navier-Stokes system. In the short time scale (i.e. $\tau_\varepsilon=1$), as $\varepsilon\rightarrow 0$, the compressible Stokes system converges to the acoustic system. While in the longer time scale, (i.e. $\tau_\varepsilon=\varepsilon$), the limit is the incompressible Stokes equations. In this sense, the (compressible) acoustic system and the incompressible Stokes equations can be unified in one system, which is the compressible Stokes system. 

Golse and Levermore's proposal is on the linear case. Motivated by their idea, in this paper, we start from the scaled general compressible Navier-Stokes, i.e. the diffusion terms are order $O(\varepsilon$. When the time scale $\tau_\varepsilon=1$, the limit will be the compressible Euler system. This is the inviscid limit, which is a well-known analytically hard problem. When the time scale $\tau_\varepsilon=\varepsilon$, the limit will be the incompressible Navier-Stokes equations, which has been well-studied, as mentioned above. The main novelty of this paper is that we propose a new relative entropy approach, which can be analytically proved, as least in the framework of classical solutions. For the clarity of the presentation, we mainly focus on the formal calculation. However, we clearly state the the approach how the rigorous analytical proof could be followed from our formal analysis. The main advantage of this approach is that we could obtain the convergence rate to the incompressible Navier-Stokes equations. This could not be obtained from the previous compactness arguments. We state the main ideas of our appraoch in the following.   

When $\tau_{\varepsilon}=1$, it has been shown in the formal level (see e.g., \cite{BGL1}), using the Chapman-Enskog expansion, that the solution of \eqref{longtime_Boltzmann} can be approximated by $f_{\varepsilon} = M_{\varepsilon} (1 + \varepsilon g_{\varepsilon} + \varepsilon^2 w_{\varepsilon})$,
where $M_{\varepsilon} = \mathcal{M}(\rho_{\varepsilon}, u_{\varepsilon},\theta_{\varepsilon})$ and $(\rho_{\varepsilon}, u_{\varepsilon} ,\theta_{\varepsilon})$ satisfies the following compressible Navier-Stokes system with dissipation of the order $\varepsilon$:
\begin{equation}\label{short_CNSF} \tag{CNS}
    \left\{ \begin{aligned}
        &\partial_t \rho_{\varepsilon} + \nabla_x (\rho_{\varepsilon} u_{\varepsilon}) = 0,\\
        & \rho_{\varepsilon} (\partial_t + u_{\varepsilon} \mdot \nabla_x) u_{\varepsilon}  + \nabla_x ({\rho_{\varepsilon} \theta_{\varepsilon}}) = \varepsilon \nabla_x [ \mu_{\varepsilon}(\rho_{\varepsilon},\theta_{\varepsilon}) \sigma(u_{\varepsilon})],\\
        & \tfrac{3}{2} \rho_{\varepsilon} (\partial_t   + u_{\varepsilon} \mdot \nabla_x )\theta_{\varepsilon} + \rho_{\varepsilon} \theta_{\varepsilon}  \nabla_x \mdot u_{\varepsilon} = \varepsilon  \tfrac{1}{2} \mu_{\varepsilon}(\rho_{\varepsilon},\theta_{\varepsilon}) \sigma(u_{\varepsilon}):\sigma(u_{\varepsilon}) + \tfrac{5}{2} \varepsilon \nabla_x \cdot[\kappa(\rho_{\varepsilon},\theta_{\varepsilon}) \theta_{\varepsilon}],\\
        &(\rho_{\varepsilon}(0,x),\ u_{\varepsilon}(0,x), \ \theta_{\varepsilon}(0,x)) =  (\rho^{\mathrm{in}},\ u^{\mathrm{in}},\ \theta^{\mathrm{in}}).
    \end{aligned}\right.
\end{equation}
Here $\sigma(u)= \nabla_x u + {\nabla_x u}^T  - \frac{2}{3}(\nabla_x \mdot u) \mathrm{I}$ is the stress tensor, 
    and the viscosity coefficient $\mu_{\varepsilon}(\rho,\theta)$ along with the heat-conductivity coefficient $\kappa_{\varepsilon}(\rho,\theta)$ are defined as follows
    \begin{equation}\label{local_viscosity_and_heat_conductivity}
        \begin{aligned}
            \mu(\rho,\theta) = \frac{1}{10} \int_{\mathbb{R}^3_v} A(V) :  \hat{A}(V) \mathcal{M} dv,\\
            \kappa(\rho,\theta) = \frac{2}{15} \int_{\mathbb{R}^3_v} B(V) \cdot \hat{B}(V) \mathcal{M} dv,
        \end{aligned}
    \end{equation}
    where
    \begin{equation}
        A(V) = V\otimes V - \frac{|V|^2}{3}I, \qquad B(V) = V(\frac{|V|^2}{2}- \frac{5}{2}),
    \end{equation}
    with $\displaystyle V= \frac{v-u}{\sqrt{\theta}}$.
    Moreover, $\hat{A} (V)$ and $\hat{B}(V)$ are the unique solutions in $(\text{Ker} \mathcal{L}_{\mathcal{M}}) ^{\perp}$ of the following equations respectively
    \begin{equation}
        \mathcal{L}_{\mathcal{M}} \hat{A} (V) = A(V), \quad  \mathcal{L}_{\mathcal{M}} \hat{B}(V) = B(V),
    \end{equation}
    with
        \begin{equation}
        \mathcal{L}_{\mathcal{M}}(g) := - \frac{1}{\mathcal{M}}(C(\mathcal{M},\mathcal{M}g) + C(\mathcal{M}g,\mathcal{M})).
    \end{equation}

    We note that the compressible Navier-Stokes system \eqref{short_CNSF} is not a limit of the Boltzmann equation as the Knudsen number $\varepsilon \rightarrow 0$ but a second order approximation in the Chapman-Enskog expansion.
    There are also a lot of results on this approximation, we refer to \cites{BGL1,Kawashima_Matsumura_Nishida_1979,Duan_Liu_2021,Liu_Yang_Zhao}  and the references within.

    When Mach number is small (when we set $\tau_\varepsilon = \varepsilon$ as will be assumed throughout), it is expected that the slightly compressible fluids are close to incompressible fluids. 
    Then there is a natural relation between \eqref{short_CNSF} and \eqref{INSF} by studying the fluctuation of $(\rho_\varepsilon,u_\varepsilon,\theta_\varepsilon)$ around $(1,0,1)$. More specifically, one may consider the following  compressible Navier-Stokes system with small Mach number:
    \begin{equation} \label{longtime_CNS}\tag{$\text{CNS}_{\varepsilon}$}
        \left\{ \begin{aligned}
            &\varepsilon\partial_t \rho_{\varepsilon} + \nabla_x (\rho_{\varepsilon} u_{\varepsilon}) = 0,\\
            &\varepsilon\partial_t u_{\varepsilon} + u_{\varepsilon} \mdot \nabla_x u_{\varepsilon}  + \frac{1}{\rho_{\varepsilon}} \nabla_x (\rho_{\varepsilon} \theta_{\varepsilon})  = \varepsilon \frac{1}{\rho_{\varepsilon}} \nabla_x [ \mu_{\varepsilon}(\rho_{\varepsilon},\theta_{\varepsilon}) \sigma(u_{\varepsilon})],\\
            &\varepsilon \partial_t \theta_{\varepsilon}  + u_{\varepsilon} \mdot \nabla_x \theta_{\varepsilon} + \frac{2}{3} \theta_{\varepsilon}  \nabla_x \mdot u_{\varepsilon} = \varepsilon  \frac{1}{\rho_{\varepsilon}} \frac{1}{3} \mu_{\varepsilon}(\rho_{\varepsilon},\theta_{\varepsilon}) \sigma(u_{\varepsilon}):\sigma(u_{\varepsilon}) +  \frac{1}{\rho_{\varepsilon}}\frac{5}{3} \varepsilon \nabla_x \cdot[\kappa(\rho_{\varepsilon},\theta_{\varepsilon}) \theta_{\varepsilon}],\\
            &(\rho_{\varepsilon}(0,x),\ u_{\varepsilon}(0,x), \ \theta_{\varepsilon}(0,x)) = (1+ \varepsilon \tilde{\rho}^{\mathrm{in}},\ \tilde{u}^{\mathrm{in}},\ 1+ \varepsilon \tilde{\theta}^{\mathrm{in}}).
        \end{aligned}\right.
    \end{equation}

    Then the fluctuations $(\tilde{\rho}_{\varepsilon}, \tilde{u}_{\varepsilon},\tilde{\theta}_{\varepsilon})$ of the solutions of \eqref{longtime_CNS}, which satisfying $ (\rho_{\varepsilon}, u_{\varepsilon},\theta_{\varepsilon} )= (1+ \varepsilon \tilde{\rho}_{\varepsilon}, \varepsilon \tilde{u}, 1+ \varepsilon \tilde{\theta}) $, should converge to the solutions of \eqref{INSF} in some strong or weak sense (depending on the initial data and the domain along with proper boundary conditions).
        This process is known as the low Mach number limit, and we refer to \cites{Klainerman_Majda_1981,Klainerman_Majda_1982,Danchin_AJM,Alazard_2006,Ebin,MS_ARMA,DGLM,Masmoudi_2022,CGHJ} for more information.
        We mention that when the domain is considered to be torus, and the initial data are ill-prepared(i.e. the initial data are not required to satisfy the incompressibility and Boussinesq relation), the convergence cannot be strong, due to the acoustic waves generated by initial compression, which will exist in the long time.

        This phenomenon shall also arise when considering the incompressible Navier-Stokes-Fourier limit of the Boltzmann equation, which prevents the strong convergence of $g_{\varepsilon}$ to $g= v \cdot u + \theta(\frac{3}{2} |v|^2 -\frac{5}{2}) $ if the kinetic part of initial data is well-prepared, but the fluid part of initial data is ill-prepared.
        Therefore, when considering the incompressible Navier-Stokes limit of the Boltzmann equation on torus with ill-prepared initial data, we need also to characterize the behavior of acoustic waves in the meantime.

        We deal with this problem by splitting the incompressible Navier-Stokes limit into two steps. The first step is the approximation of Boltzmann equation by compressible Navier-Stokes system with small Mach number, and the second step is the low Mach number limit of the compressible Navier-Stokes system.
        Their relationship is shown in the following diagram. The low Mach number limit process has been discussed in our former paper \cite{CGHJ}. In this paper we focus on the compressible Navier-Stokes approximation for the Boltzmann equation.
        \begin{figure}[H]
            \centering
            \begin{tikzpicture}[>=latex, node distance=2cm]
                \node [draw, rectangle]  (A) at (0,0) {Kinetic Level: };
                \node [draw, rectangle,text width=4cm, align=center] (A_1) at (5,0) {Boltzmann equation\\Kn=Ma=$\varepsilon$};
                \node [draw, rectangle] (B) at (0,-3) {Fluid Level:} ;
                \node [draw, rectangle, text width=3.5cm, align=center]  (C) at(12,-3) { Compressible \\ Navier-Stokes system\\  Ma = $\varepsilon$};
                \node [draw, rectangle, text width=3cm, align=center] (D)  at (5,-3) {Incompressible \\  Naiver-Stokes-Fourier system};

                 \draw[->] (A) -- (B);
                 \draw [->] (A_1) -- (C) node[midway, right,text width=4cm, align=center] {Hydrodynamic \\asymptotic\\ $\varepsilon \rightarrow 0$};
                 \draw [->] (A_1) -- (D) node[midway, left,text width=4cm, align=center] {Hydrodynamic \\limit\\ $\varepsilon \rightarrow 0$};
                 \draw [->] (C) -- (D)node[midway, above,text width=4cm, align=center] {Low Mach number}
                                        node[midway, below,text width=4cm, align=center] {limit $\varepsilon \rightarrow 0$};

            \end{tikzpicture}
            \caption{Relationship between Boltzmann equation, incompressible Navier-Stokes-Fourier system and compressible Navier-Stokes system.}
            \label{fig:tikz_example}
        \end{figure}
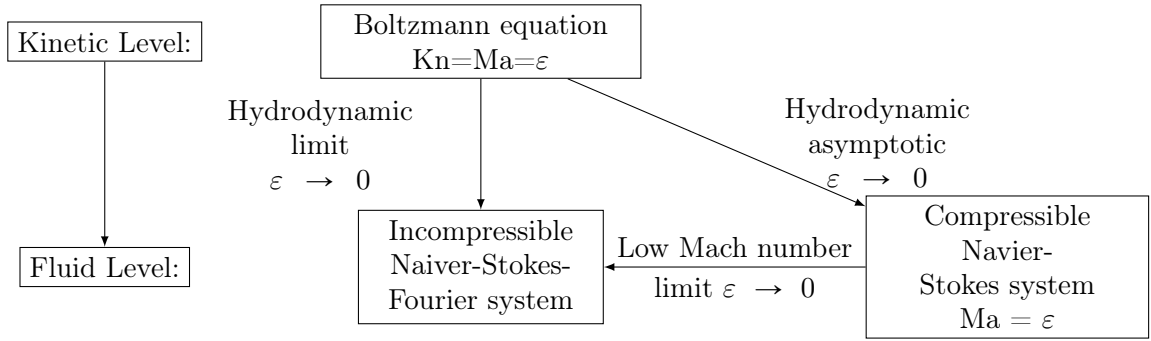

    As illustrated in \eqref{short_CNSF}, the compressible Navier-Stokes-Fourier system derived from Boltzmann equation is {\em not} a limit, since in this setting, the compressible Navier-Stokes-Fourier  system  depends on the Knudsen number itself. So it is not a {\em limit}, but an {\em asymptotic} problem. 
    We shall use the relative entropy to describe the compressible Navier-Stokes approximation of the Boltzmann equation. The relative entropy $H(f|g)$ for $f>0$ and $g>0$
    is defined as
    \begin{equation}\label{RE}
        H(f|g) = \int_{\mathbb{T}^3_x}\int_{\mathbb{R}^3_v} f \log(\frac{f}{g}) - f + g \  dxdv.
    \end{equation}
    The key issue is that in this paper, the ``target'', i.e. $g$ in the relative entropy \eqref{RE} must depends on the Kundsen number $\varepsilon$. More specifically, we set $f$ in \eqref{RE} as the solutions to the scaled Boltzmann equation $f_\varepsilon$,  and $g$ as the local Maxwellian ($M_{\varepsilon}$ defined below) governed by the solutions to the scaled compressible NSF. We calculate the evolution of this scaled relative entropy $\frac{1}{{\varepsilon}^2} H(f_{\varepsilon}| M_{\varepsilon})(t)$ and prove its stability, i.e. the evolution of the scaled relative entropy in any time $t>0$ can be controlled by its initial data. More importantly, we characterize its dissipation rate. It is the first quantitative convergence result for the compressible Navier-Stokes-Fourier system in the sense of relative entropy. This approach avoids any expansion method, such as Chapman-Enskog or Hilbert expansions. The main disadvantage of the expansion methods is it can only construct {\em special} (in the format of expansions) solution of the original Boltzmann equation. On the other hand, the relative entropy method employed here is to directly measure the distance in the sense of relative entropy between {\em any} solutions of the scaled Boltzmann equation and those of compressible NSF. Furthermore, the asymptotic behavior is quantitatively characterized by relative entropy and the relative entropy dissipation is explicit. This is the main novelty of this paper.

    The idea of using the notion of relative entropy for this kind of problems comes from the notion of entropic convergence developed by Bardos, Golse and Levermore in \cite{BGL2}, and on the other hand from Yau's derivation of the hydrodynamic limit of Ginzburg-Landau lattice model \cite{Yau}.
    Latter, Saint-Raymond use relative entropy to deduce the incompressible Euler limit for well-prepared initial data in \cite{Saint-Raymond_2003} and for ill-prepared initial data in \cite{Saint-Raymond_2009}.

    We use the solution $(\rho_{\varepsilon},u_{\varepsilon},\theta_{\varepsilon}) = (1+ \varepsilon \tilde{\rho}_{\varepsilon}, \varepsilon \tilde {u}_{\varepsilon},1+ \varepsilon \tilde{\theta}_{\varepsilon})$ of \eqref{longtime_CNS} to construct a local Maxwellian $M_{\varepsilon} = \mathcal{M}(\rho_{\varepsilon},u_{\varepsilon},\theta_{\varepsilon}) = \mathcal{M} (1+ \varepsilon \tilde{\rho}_{\varepsilon}, \varepsilon \tilde {u}_{\varepsilon},1+ \varepsilon \tilde{\theta}_{\varepsilon}) $,
    and we would like to use the relative entropy $H(f_{\varepsilon}| M_{\varepsilon})$ to measure the difference between the solution of the rescaled Boltzmann equation $\eqref{longtime_Boltzmann}$ and the solution of the compressible Navier-Stokes system $\eqref{longtime_CNS}$.
    More specifically, we derive the time evolution of the relative entropy $\frac{d}{dt} \frac{1}{\varepsilon^2} H(f_{\varepsilon}| M_{\varepsilon})$, which leads to the following modulated entropy inequality
    \begin{equation}
        \begin{aligned}
            \tfrac{1}{{\varepsilon}^2} &H(f_{\varepsilon}| M_{\varepsilon})(t) +\int_0^t \int_{\mathbb{T}^3_x}\left\{\tfrac{1}{\varepsilon^4}D(f_\varepsilon)(s,x) - \tfrac{1}{2} \mu \sigma(\u^b_\varepsilon): \sigma(\u^b_\varepsilon) - \tfrac{5}{2}\kappa (\nabla_x \theta^b_{\varepsilon})^2\right\}\,  \d x\d s \\
            &+\int_0^t \int_{\mathbb{T}^3_x} \tfrac{1}{2}\mu \sigma(\tilde{\u_{\varepsilon}}-\u^b_{\varepsilon}):\sigma(\tilde{\u}_{\varepsilon}-\u^b_{\varepsilon}) + \tfrac{5}{2}  \kappa(\nabla_x \tilde{\theta}_{\varepsilon} -\nabla_x \theta^b_{\varepsilon})^2 \ dxds\\
            &\lesssim \frac{1}{\varepsilon^2} H(f^{\mathrm{in}}_{\varepsilon}| M^{\mathrm{in}}_{\varepsilon}) + \mathcal{O}(\varepsilon).
        \end{aligned}
    \end{equation}
    Here and in the sequel $(\rho^b_{\varepsilon}, \u^b_{\varepsilon},\theta^b_{\varepsilon})$ are moments of perturbation $g_{\varepsilon}$ as defined in \eqref{g_moments}.
    
    Under the assumption of vanishing initial relative entropy
    \begin{equation}
        \frac{1}{\varepsilon^2} H(f^{\mathrm{in}}_{\varepsilon}| M_{\varepsilon}^{\mathrm{in}}) \rightarrow 0  \ \ \text{as } \varepsilon \rightarrow 0,
    \end{equation}
    for each $t > 0$, we obtain the following asymptotics:
    \begin{enumerate}[label=\textbullet]
        \item Asymptotic of the entropy dissipation rate
            \begin{equation}
               \lim_{\varepsilon \rightarrow 0} \left[\int_0^t \int_{\mathbb{T}^3_x}\frac{1}{\varepsilon^4}D(f_\varepsilon)(s,x) - \frac{1}{2} \mu \sigma(u^b_{\varepsilon}): \sigma(u^b_{\varepsilon}) - \frac{5}{2}\kappa (\nabla_x \theta^b_{\varepsilon})^2\  dxds \right] = 0;
            \end{equation}
        \item Asymptotic of the momentum and energy flux
            \begin{equation}
                \lim_{\varepsilon \rightarrow 0} \left[ \int_0^t \int_{\mathbb{T}^3_x} \frac{1}{2}\mu \sigma(\tilde{\u}_{\varepsilon}-\u^b_{\varepsilon}):\sigma(\tilde{\u}_{\varepsilon}-\u^b_{\varepsilon}) + \frac{5}{2}  \kappa(\nabla_x \tilde{\theta}_{\varepsilon} -\nabla_x \theta^b_{\varepsilon})^2 \ dxds \right]=0;
            \end{equation}
        \item Asymptotic of the relative entropy
            \begin{equation}
                \lim_{\varepsilon \rightarrow 0}\frac{1}{{\varepsilon}^2} H(f_{\varepsilon}| M_{\varepsilon})(t)= 0.
            \end{equation}
    \end{enumerate}
   We state our main theorem as follows:
    \begin{theorem}\label{main}
        Let $f_{\varepsilon}$  be of the form $f_{\varepsilon} = M + \varepsilon M g_{\varepsilon} $, and $\{f_{\varepsilon}\}$ is a family of solutions to \eqref{longtime_Boltzmann} with $\tau_\varepsilon=\varepsilon$ satisfying the assumptions \eqref{conservation_of_mass},\eqref{conservation_of_momentum}, \eqref{conservation_of_energy} and \eqref{entropy_inequality}.
        Let $(\rho_{\varepsilon}, u_{\varepsilon},\theta_{\varepsilon})$ be of the form $(\rho_{\varepsilon},u_{\varepsilon},\theta_{\varepsilon}) = (1+ \tilde{\rho}_{\varepsilon},\varepsilon \tilde{u}_{\varepsilon},1+ \varepsilon \tilde{\theta}_{\varepsilon})$, and $\{(\rho_{\varepsilon},u_{\varepsilon},\theta_{\varepsilon})\}$ is a family of solutions to \eqref{longtime_CNS}.
        Then we have, for $t \geqslant 0$,
        \begin{equation}\label{main_estimate}
            \frac{1}{{\varepsilon}^2} H(f_{\varepsilon}| M_{\varepsilon})(t)  \leqslant \frac{1}{\varepsilon^2}H(f^{\mathrm{in}}_{\varepsilon}| M^{\mathrm{in}}_{\varepsilon} ) + \mathcal{O}(\varepsilon),
        \end{equation}
        \begin{equation}\label{entropy_dissipation_bound}
           \int_0^t \int_{\mathbb{T}^3_x}\frac{1}{\varepsilon^4}D(f_\varepsilon)(s,x) - \frac{1}{2} \mu \sigma(\u^b_\varepsilon): \sigma(\u^b_\varepsilon) - \frac{5}{2}\kappa (\nabla_x \theta^b_{\varepsilon})^2\  dxds \lesssim \frac{1}{\varepsilon^2} H(f_{\varepsilon}^{\mathrm{in}}|M_{\varepsilon}^{\mathrm{in}}) + \mathcal{O}(\varepsilon),
         \end{equation}
         \begin{equation}\label{energy_momentum_flux_bound}
             \int_0^t \int_{\mathbb{T}^3_x} \frac{1}{2}\mu \sigma(\tilde{\u}_{\varepsilon}-\u^b_{\varepsilon}):\sigma(\tilde{\u}_{\varepsilon}-\u^b_{\varepsilon}) + \frac{5}{2}  \kappa(\nabla_x \tilde{\theta} -\nabla_x \theta^b_{\varepsilon})^2 \ dxds \lesssim \frac{1}{\varepsilon^2} H(f_{\varepsilon}^{\mathrm{in}}|M_{\varepsilon}^{\mathrm{in}}) + \mathcal{O}(\varepsilon),
        \end{equation}
        where
        \begin{equation}
            M_{\varepsilon}^{\mathrm{in}} = M(1+ \varepsilon \tilde\rho^{\mathrm{in}}, \varepsilon \tilde{u}^{\mathrm{in}},1 + \varepsilon \tilde{\theta}^{\mathrm{in}}).
        \end{equation}
    \end{theorem}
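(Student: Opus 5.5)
We follow the relative entropy method of Yau and Saint-Raymond: we write an exact evolution law for $H(f_\varepsilon|M_\varepsilon)$, then split it into a dissipative part, a part that vanishes by the structure of \eqref{longtime_CNS}, and a remainder that is $\mathcal{O}(\varepsilon)$ after Gronwall.

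The first step is the identity. Expanding the definition \eqref{RE} gives
\[
H(f_\varepsilon|M_\varepsilon)=\iint f_\varepsilon\log f_\varepsilon - f_\varepsilon\log M_\varepsilon - f_\varepsilon + M_\varepsilon\,dxdv,
\]
where $\log M_\varepsilon=\log\rho_\varepsilon-\frac32\log(2\pi\theta_\varepsilon)-\frac{|v-u_\varepsilon|^2}{2\theta_\varepsilon}$ is a collision invariant in $v$ with $(x,t)$-dependent coefficients.
\begin{itemize}
\item For the term $\iint f_\varepsilon\log f_\varepsilon$ we use the entropy inequality \eqref{entropy_inequality}. With $\tau_\varepsilon=\varepsilon$ it yields, after scaling by $\varepsilon^{-2}$, the dissipation term $\varepsilon^{-4}\int D(f_\varepsilon)$.
\item For the term $\iint f_\varepsilon\log M_\varepsilon$ we differentiate in time. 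We use the Boltzmann equation, and the collision term drops because $\log M_\varepsilon\in\mathcal N$ pointwise.
\end{itemize}
Together with the conservation laws \eqref{conservation_of_mass}--\eqref{conservation_of_energy} this produces
\[
\frac{1}{\varepsilon^2}H(t)+\frac{1}{\varepsilon^4}\int_0^t\!\!\int D\,dxds \le \frac{1}{\varepsilon^2}H(0) - \frac{1}{\varepsilon^3}\int_0^t\!\!\iint f_\varepsilon\,(\varepsilon\partial_t+v\cdot\nabla_x)\log M_\varepsilon\,dvdxds .
\]
Next we compute $(\varepsilon\partial_t+v\cdot\nabla_x)\log M_\varepsilon$ in terms of $V=(v-u_\varepsilon)/\sqrt{\theta_\varepsilon}$. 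By the standard calculation, using \eqref{longtime_CNS} to replace the time derivatives, the Euler parts cancel. What remains is a linear combination of
\[
A(V):\nabla_x u_\varepsilon/\theta_\varepsilon \quad\text{and}\quad B(V)\cdot\nabla_x\theta_\varepsilon/\sqrt{\theta_\varepsilon},
\]
plus $\varepsilon$ times (fluid viscous terms) $\times$ (collision invariants in $V$). Since $u_\varepsilon=\varepsilon\tilde u_\varepsilon$ and $\theta_\varepsilon=1+\varepsilon\tilde\theta_\varepsilon$, the leading integrand is $\varepsilon\, f_\varepsilon\,(A:\nabla\tilde u_\varepsilon+B\cdot\nabla\tilde\theta_\varepsilon)+\mathcal{O}(\varepsilon^2)$.

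Then we insert $f_\varepsilon=M+\varepsilon Mg_\varepsilon$. The $M$ part integrates to zero, since $A,B\perp 1$. The viscous terms paired against the moments of $f_\varepsilon-M_\varepsilon$ give quantities of the type $\varepsilon\mu\,\sigma(\tilde u_\varepsilon):\sigma(u^b_\varepsilon)$. After the prefactor $\varepsilon^{-3}$ the main term becomes $-\frac{1}{\varepsilon}\int\langle g_\varepsilon,A\rangle:\nabla\tilde u_\varepsilon+\dots$. The flux $\frac1\varepsilon\langle A g_\varepsilon\rangle=\frac1\varepsilon\langle \hat A,\mathcal L g_\varepsilon\rangle$ is handled through the kinetic formulation: $\mathcal L g_\varepsilon$ is expressed via the rescaled collision integrand $q_\varepsilon=\frac{1}{\varepsilon^2}(f'_\varepsilon f'_{1\varepsilon}-f_\varepsilon f_{1\varepsilon})$, whose square is controlled by $\varepsilon^{-4}D(f_\varepsilon)$, plus the quadratic term $Q(g_\varepsilon,g_\varepsilon)$. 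Formally $\frac1\varepsilon\langle A g_\varepsilon\rangle\approx \mu\,\sigma(u^b_\varepsilon)+(\text{nonlinear terms})$, and similarly for $B$ with $\kappa\nabla\theta^b_\varepsilon$.

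Combining these, we complete squares:
\[
\mu\sigma(\tilde u_\varepsilon):\sigma(u^b_\varepsilon)-\tfrac12\mu\sigma(\tilde u_\varepsilon):\sigma(\tilde u_\varepsilon)
=\tfrac12\mu\sigma(u^b_\varepsilon):\sigma(u^b_\varepsilon)-\tfrac12\mu\sigma(\tilde u_\varepsilon-u^b_\varepsilon):\sigma(\tilde u_\varepsilon-u^b_\varepsilon),
\]
and similarly for the temperature. Moving $\frac12\mu|\sigma(u^b_\varepsilon)|^2+\frac52\kappa|\nabla\theta^b_\varepsilon|^2$ to the left gives exactly the combination in \eqref{entropy_dissipation_bound}. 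The differences appear with a good sign, as in \eqref{energy_momentum_flux_bound}.

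The main obstacle is controlling the remaining cubic/nonlinear terms, which come from the convective parts $u_\varepsilon\cdot\nabla$ and from $Q(g_\varepsilon,g_\varepsilon)$. We bound these by $C(\|\nabla\tilde u_\varepsilon\|_{L^\infty},\|\nabla\tilde\theta_\varepsilon\|_{L^\infty})\,\varepsilon^{-2}H(f_\varepsilon|M_\varepsilon)$ plus $\mathcal{O}(\varepsilon)$. For this we use the Young-type inequality $\varepsilon^{-2}\langle |f_\varepsilon-M_\varepsilon|\,|\Phi|\rangle\lesssim \varepsilon^{-2}H+\ldots$ with exponential weights, together with the fact that at leading order $g_\varepsilon-(\tilde\rho_\varepsilon+v\cdot\tilde u_\varepsilon+\ldots)$ is measured by $\varepsilon^{-2}H$. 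Formally we treat $g_\varepsilon$ as bounded, and we take the smoothness of the fluid solution as given. Gronwall's inequality then yields \eqref{main_estimate}. Feeding this back into the integrated identity gives \eqref{entropy_dissipation_bound} and \eqref{energy_momentum_flux_bound}.
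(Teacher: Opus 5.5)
Your outline follows the paper's first half closely: the relative entropy identity from \eqref{entropy_inequality}, the expansion of $(\varepsilon\partial_t+v\mdot\nabla_x)\log M_\varepsilon$ with \eqref{longtime_CNS} cancelling the Euler part, the computation of $\frac1\varepsilon\langle A,g_\varepsilon\rangle=\frac1\varepsilon\langle\hat A,\mathcal L g_\varepsilon\rangle$ from the equation, and the completion of squares. The closing step, however, fails as written. After completing squares, the left-hand side contains
\[
\int_0^t\!\!\int_{\mathbb{T}^3}\Big[\tfrac{1}{\varepsilon^4}D(f_\varepsilon)-\tfrac12\mu\,\sigma(u^b):\sigma(u^b)-\tfrac52\kappa|\nabla_x\theta^b|^2\Big]\,dx\,ds,
\]
which has no sign a priori: both parts are nonnegative and of the same order. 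Moving the viscous and thermal terms to the left and naming the result does not let you drop it. Unless you prove this quantity is $\geqslant-\mathcal{O}(\varepsilon)$, you have no upper bound on $\varepsilon^{-2}H(f_\varepsilon|M_\varepsilon)(t)$. Then Gr\"onwall cannot be run, and \eqref{energy_momentum_flux_bound} cannot be separated from the sum either. Your remark that $\langle\!\langle q_\varepsilon^2\rangle\!\rangle$ is ``controlled by'' $\varepsilon^{-4}D$ does not supply this. The quantity to be absorbed is exactly the Navier--Stokes dissipation, so you need sharp constants, not a bound up to a multiplicative constant.

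The missing ingredient is the paper's entropy-dissipation control (Section~\ref{SS_EDC}), which has three parts:
(i) Expanding the logarithm in $D$ gives $\varepsilon^{-4}D(f_\varepsilon)=\frac14\langle\!\langle q_\varepsilon^2\rangle\!\rangle+o(1)$.
(ii) Writing the Boltzmann equation as $\varepsilon\partial_t g_\varepsilon+v\mdot\nabla_x g_\varepsilon=\iint q_\varepsilon M_1 b\,dv_1d\sigma$ and using \eqref{v_dot_nabla_g} gives $\langle\!\langle\hat A q_\varepsilon\rangle\!\rangle=\mu\sigma(u^b)+\langle v\mdot\nabla_x\Loth g_\varepsilon+\varepsilon\partial_t g_\varepsilon,\hat A\rangle$, and likewise $\langle\!\langle\hat B q_\varepsilon\rangle\!\rangle=\frac52\kappa\nabla_x\theta^b+\mathcal{O}(\varepsilon)$.
(iii) The Bardos--Golse--Levermore variational (Cauchy--Schwarz) inequality $\frac{1}{2\mu}|\langle\!\langle\hat A q\rangle\!\rangle|^2+\frac{2}{5\kappa}|\langle\!\langle\hat B q\rangle\!\rangle|^2\leqslant\frac14\langle\!\langle q^2\rangle\!\rangle$.
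Together these give $\int\!\!\int\varepsilon^{-4}D\geqslant\int\!\!\int\frac12\mu|\sigma(u^b)|^2+\frac52\kappa|\nabla_x\theta^b|^2-\mathcal{O}(\varepsilon)$, which is what closes the argument.

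Two smaller points. First, the sign is $\frac1\varepsilon\langle A,g_\varepsilon\rangle\approx-\mu\sigma(u^b)$, because $\mathcal Lg_\varepsilon=-\varepsilon v\mdot\nabla_x g_\varepsilon+\cdots$; this is the sign your completed square actually needs. Second, the nonlinear terms are bounded by $\varepsilon^{-2}H$ only because of an exact cancellation. Via $Q(g,g)=\frac12\mathcal L(g^2)$ on $\mathcal N$, one gets $\langle\hat A,Q(\Lpre g_\varepsilon,\Lpre g_\varepsilon)\rangle=u^b\otimes u^b-\frac13|u^b|^2I$. This combines with the $\tilde u\otimes\tilde u$ and $\tilde u\otimes u^b$ terms of $\int A(V)f_\varepsilon\,dv$ into $(\tilde u-u^b)\otimes(\tilde u-u^b)$, and similarly for $B$. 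The individual terms are $O(1)$, so you have to check this cancellation. The paper then controls the resulting quadratics through $H(f_\varepsilon|M_\varepsilon)=H(f_\varepsilon|M_{f_\varepsilon})+H(M_{f_\varepsilon}|M_\varepsilon)$ rather than a Young inequality.
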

\begin{remark}
  We remark that although the above theorem is in the formal level, its proof is indeed provide a clear way to the later rigorous proof. If we work in the framework of the classical solutions, all the corresponding existence results are available, for the Boltzmann equations, compressible and incompressible Navier-Stokes equations, at least near the constant states. Because the scalings themselves are near the constant states, so in this sense, the formal analysis is not far from the rigorous proof. 

    In fact, the estimates \eqref{main_estimate}, \eqref{entropy_dissipation_bound} and \eqref{energy_momentum_flux_bound} hold for $0 \leqslant t \leqslant T^*$, where $T^*$ is determined by the minimal lifespan of the solutions to \eqref{longtime_Boltzmann} and \eqref{longtime_CNS}. In particular, if  for all $ \varepsilon > 0$ both $f_\varepsilon$ and $(\rho_\varepsilon, u_\varepsilon, \theta_\varepsilon)$ exist globally in time, then these estimates also hold globally. Furthermore, the validity of the $\mathcal{O}(\varepsilon)$ residual term relies on uniform in $\varepsilon$ estimates for the solutions. As we are working within a perturbation framework, such existence of global solutions of \eqref{longtime_CNS} are provided in \cites{Danchin_Inventiones,Chen-Gui-Jiang}, while the required uniform bounds for $(\rho_\varepsilon, u_{\varepsilon}, \theta_{\varepsilon})$ can be found in \cite{CGHJ}. Moreover, the global existence and uniform in $\varepsilon$ estimates for the solution $f_{\varepsilon}$ of \eqref{longtime_Boltzmann} can be found in \cite{Jiang_Xu_Zhao_2018}.
\end{remark}
    In the next section, we shall prove, at a formal level, the above estimate by calculating the evolution of the relative entropy. In section \ref{SS_REC}, we show that the unsigned terms will be controlled by the relative entropy itself. In section \ref{SS_EDC}  we show that the negative terms will be controlled by the entropy dissipation.
    And in section \ref{SS_Conclusion} we conclude the estimate \eqref{main_estimate}, \eqref{entropy_dissipation_bound}, \eqref{energy_momentum_flux_bound} by Grönwall's inequality.
    \section{Compressible Navier-Stokes Approximation}

    Suppose that $f_\varepsilon= f_{\varepsilon}(t,x,v)$ is of the form $f_{\varepsilon} = M + \varepsilon M g_{\varepsilon}$, and $g_{\varepsilon}$ is in some sense a family of  solutions of \eqref{perturbed_Boltzmann}. Also, suppose that
    $f_{\varepsilon}$ satisfies the local conservation of density, momentum, and energy as follows:
        \begin{align}
            &\partial_t \int_{\mathbb{R}^3_v} f_{\varepsilon} dv + \nabla_x \cdot  \int_{\mathbb{R}^3_v} f_{\varepsilon}dv = 0, \label{conservation_of_mass} \\
            &\partial_t \int_{\mathbb{R}^3_v} vf_{\varepsilon} dv + \nabla_x \cdot \int_{\mathbb{R}^3_v} vf_{\varepsilon}dv =0,\label{conservation_of_momentum}\\
            &\partial_t \int_{\mathbb{R}^3_v} |v|^2 f_{\varepsilon} dv + \nabla_x \cdot \int_{\mathbb{R}^3_v} |v|^2f_{\varepsilon}dv=0.\label{conservation_of_energy}\\
        \end{align}
    Also, $f_{\varepsilon}$ should satisfy the following relative entropy inequality:
    \begin{equation}\label{entropy_inequality}
        H(f_{\varepsilon}| M)(t) + \frac{1}{\varepsilon^2}\int_0^t \int_{\mathbb{T}^3_x} D(f_\varepsilon)(s,x) \ ds dx \leqslant H(f^{\mathrm{in}}_{\varepsilon}| M ),
    \end{equation}
    where the relative entropy $H(f|g)$ for $f>0$ and $g>0$ is defined as
    \begin{equation}
        H(f|g) = \int_{\mathbb{T}^3_x}\int_{\mathbb{R}^3_v} f \log(\frac{f}{g}) - f + g \  dxdv,
    \end{equation}
    and $D(f)$ is defined by
    \begin{equation}
        D(f) = \frac{1}{4} \int_{\mathbb{R}^3_v}\int_{\mathbb{R}^3_v}\int_{\mathbb{S}^{N-1}}( f^{\prime} f^{\prime}_1 -f f_1) \log \frac{f^{\prime} f^{\prime}_1}{f f_1} b \ dv dv_1 d\sigma.
    \end{equation}

    To prove the estimate \eqref{main_estimate}, note that
    \begin{equation} \label{entropy_relative_entropy_relation}
        H(f_{\varepsilon}| M) = H(f_{\varepsilon}| M_{\varepsilon}) +   \int_{\mathbb{T}^3_x} \int_{\mathbb{R}^3_v} f_{\varepsilon} \log \frac{M_{\varepsilon}}{M} - M_{\varepsilon} + M \ dx dv.
    \end{equation}
    Combing \eqref{entropy_inequality} and \eqref{entropy_relative_entropy_relation} we have
    \begin{equation}\label{relative_entropy}
        \begin{aligned}
        \frac{1}{{\varepsilon}^2} H(f_{\varepsilon}| M_{\varepsilon})(t)
        + \frac{1}{\varepsilon^2}\int^t_0 \int_{\mathbb{T}^3_x} \int_{\mathbb{R}^3_v}  \frac{d}{dt}(f_{\varepsilon} \log \frac{M_{\varepsilon}}{M} - M_{\varepsilon} + M) \ dx dv ds\\
         +\frac{1}{\varepsilon^4}\int_0^t \int_{\mathbb{T}^3_x} D(f_\varepsilon)(s,x) \ ds dx
          \leqslant \frac{1}{\varepsilon^2}H(f^{\mathrm{in}}_{\varepsilon}| M^{\mathrm{in}}_{\varepsilon} ).
        \end{aligned}
    \end{equation}
    Clearly,
    \begin{equation}
        \int^t_0 \int_{\mathbb{T}^3_x} \int_{\mathbb{R}^3_v}\frac{d}{dt} M_{\varepsilon} \ dx dv ds = \int^t_0 \int_{\mathbb{T}^3_x} \frac{d}{dt} \rho_{\varepsilon} \ dx ds=0.
    \end{equation}
    We focus on the rest term:
    \begin{equation}\label{temp_dt1}
        \begin{aligned}
            \int^t_0 \int_{\mathbb{T}^3_x} \int_{\mathbb{R}^3_v}  \frac{d}{dt}(f_{\varepsilon} \log \frac{M_{\varepsilon}}{M} ) \ dx dv ds
            =  \int^t_0 \int_{\mathbb{T}^3_x} \int_{\mathbb{R}^3_v} \partial_t (f_{\varepsilon})\log \frac{M_{\varepsilon}}{M} + f_{\varepsilon} \partial_t \log M_{\varepsilon} \ dx dv ds. \\
        \end{aligned}
    \end{equation}
    Using \eqref{longtime_Boltzmann} and noticing that $\log(\frac{M_{\varepsilon}}{M})$ is a linear combination of $1,v \text{ and } |v|^2$, we then have
    \begin{equation}\label{temp_dt2}
        \begin{aligned}
           \int^t_0 \int_{\mathbb{T}^3_x} \int_{\mathbb{R}^3_v} \partial_t (f_{\varepsilon})\log \frac{M_{\varepsilon}}{M} \ dx dv ds
             =  \int^t_0 \int_{\mathbb{T}^3_x} \int_{\mathbb{R}^3_v} - \frac{1}{\varepsilon} v\mdot \nabla_x (f_{\varepsilon})\log \frac{M_{\varepsilon}}{M}\ dx dv ds.
        \end{aligned}
    \end{equation}
    Combing \eqref{temp_dt1} and \eqref{temp_dt2} and using integration by parts leads to
    \begin{equation}\label{temp_dt3}
        \begin{aligned}
            \int^t_0 \int_{\mathbb{T}^3_x} \int_{\mathbb{R}^3_v}  \frac{d}{dt}(f_{\varepsilon} \log \frac{M_{\varepsilon}}{M} ) \ dx dv ds
            = \int^t_0 \int_{\mathbb{T}^3_x} \int_{\mathbb{R}^3_v} f_{\varepsilon} \left(\partial_t + \frac{1}{\varepsilon} v\mdot \nabla_x \right) \log M_{\varepsilon} \ dx dv ds .
        \end{aligned}
    \end{equation}
    To proceed, we need the following observation.
    \begin{lemma} \label{observation_lemma}
    For $\rho >0$ and  $\theta >0$, we have
    \begin{equation}\label{Euler_operator}
        \begin{aligned}
        v \mdot \nabla_x\log \mathcal{M}(\rho,u,\theta) &=
        \left(
        \begin{pmatrix}
            u\mdot \nabla_x \rho + \rho \nabla_x \mdot u\\
            u \mdot \nabla_x u  + \frac{\theta}{\rho} \nabla_x \rho + \nabla_x \theta\\
            u \mdot \nabla_x \theta + \frac{2}{3} \theta  \nabla_x \mdot u
        \end{pmatrix}
        \raisebox{-2.4ex}{,}
        \begin{pmatrix}
            1/ \rho \\
            V / \sqrt{\theta}\\
            \frac{1}{\theta}(\frac{|V|^2}{2} -\frac{3}{2})
        \end{pmatrix}
        \right)\\
        &+A(V):\nabla_x u + B(V)\cdot \frac{\nabla_x \theta}{\sqrt{\theta}}.
        \end{aligned}
    \end{equation}
    \end{lemma}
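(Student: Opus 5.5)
The identity is a direct computation, and I would organise it around the variable $V=(v-u)/\sqrt{\theta}$. Write
\begin{equation}
\log \mathcal{M}(\rho,u,\theta) = \log\rho - \tfrac{3}{2}\log\theta - \tfrac{|v-u|^2}{2\theta} - \tfrac32\log(2\pi).
\end{equation}
Taking the spatial gradient gives
\begin{equation}
\nabla_x \log\mathcal{M} = \frac{\nabla_x\rho}{\rho} + \frac{(\nabla_x u)\,(v-u)}{\theta} + \Big(\frac{|v-u|^2}{2\theta^2}-\frac{3}{2\theta}\Big)\nabla_x\theta ,
\end{equation}
with the index convention $\partial_{x_i} u_j\,(v-u)_j/\theta$ in the middle term. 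In terms of $V$ this reads
\begin{equation}
\nabla_x\log\mathcal{M}=\frac{\nabla_x\rho}{\rho}+\frac{\nabla_x u\,V}{\sqrt\theta}+\frac{1}{\theta}\Big(\frac{|V|^2}{2}-\frac32\Big)\nabla_x\theta .
\end{equation}

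Next I contract with $v$, writing $v = u + \sqrt{\theta}\,V$, and sort every resulting term by its degree in $V$: degrees $0$, $1$, $2$ and $3$. The three parts are treated as follows.

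\textbf{Density part.} This gives $u\mdot\nabla_x\rho/\rho$ (degree 0) and $V\mdot\nabla_x\rho\,\sqrt\theta/\rho$ (degree 1).

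\textbf{Velocity part.} This gives $u\mdot\nabla_x u\mdot V/\sqrt\theta$ (degree 1) and $V\otimes V:\nabla_x u$ (degree 2). For the degree-2 term I decompose $V\otimes V = A(V)+\tfrac{|V|^2}{3}I$, so it becomes $A(V):\nabla_x u + \tfrac{|V|^2}{3}\nabla_x\mdot u$.

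\textbf{Temperature part.} This gives $\frac1\theta(\frac{|V|^2}{2}-\frac32)\,u\mdot\nabla_x\theta$ and $\frac{1}{\sqrt\theta}V(\frac{|V|^2}{2}-\frac32)\mdot\nabla_x\theta$. For the latter I write $V(\frac{|V|^2}{2}-\frac32)=B(V)+V$. This produces $B(V)\mdot\nabla_x\theta/\sqrt\theta$ together with an extra degree-1 term $V\mdot\nabla_x\theta/\sqrt\theta$.

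The final step is to match the leftovers against the right-hand side of \eqref{Euler_operator}, and this is where the only real care is needed. The right-hand side pairs the Euler-type vector with the basis $1/\rho$, $V/\sqrt\theta$, $\frac1\theta(\frac{|V|^2}{2}-\frac32)$. The pieces $1/\rho$ and $\frac1\theta(\frac{|V|^2}{2}-\frac32)$ both contain constants, so the term $\tfrac{|V|^2}{3}\nabla_x\mdot u$ must be split between them. I write
\begin{equation}
\tfrac{|V|^2}{3}\nabla_x\mdot u = \tfrac{2}{3}\theta\,\nabla_x\mdot u\cdot\frac1\theta\Big(\frac{|V|^2}{2}-\frac32\Big) + \nabla_x\mdot u ,
\end{equation}
and the constant $\nabla_x\mdot u = \rho\nabla_x\mdot u\cdot\frac1\rho$ supplies the $\rho\nabla_x\mdot u$ entry of the first component. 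With this split the components match as follows:
\begin{enumerate}
\item the degree-1 terms collect to $\big(u\mdot\nabla_x u+\frac{\theta}{\rho}\nabla_x\rho+\nabla_x\theta\big)\mdot V/\sqrt\theta$, since $\sqrt\theta/\rho=(\theta/\rho)/\sqrt\theta$;
\item the $(\frac{|V|^2}{2}-\frac32)$ terms collect to $u\mdot\nabla_x\theta+\frac23\theta\nabla_x\mdot u$;
\item the scalar terms give $(u\mdot\nabla_x\rho+\rho\nabla_x\mdot u)/\rho$.
\end{enumerate}
There is no analytic obstacle here; the only risk is bookkeeping of the trace and constant parts.
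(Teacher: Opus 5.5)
Your proposal is correct and follows essentially the same computation as the paper. Both substitute $v=u+\sqrt{\theta}V$ into $v\cdot\nabla_x\log\mathcal{M}$, split $V\otimes V=A(V)+\frac{2}{3}(\frac{|V|^2}{2}-\frac{3}{2})I+I$, rewrite the cubic term as $B(V)$ plus a multiple of $V$, and collect coefficients against $1/\rho$, $V/\sqrt{\theta}$ and $\frac{1}{\theta}(\frac{|V|^2}{2}-\frac{3}{2})$. The only cosmetic difference is that the paper uses $\frac{|V|^2}{2}V=B(V)+\frac{5}{2}V$ and carries the $-\frac{3}{2}$ piece separately, whereas you combine them as $V(\frac{|V|^2}{2}-\frac{3}{2})=B(V)+V$.
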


    \begin{remark}
        This equality shows a connection between the Boltzmann equation and the fluid equation(Compressible Euler, Compressible Navier-Stokes), as the convection terms and the pressure terms in the fluid equation arise  naturally from this formulation.
        
        Also, if we linearize above equality around the global Maxwellian $\mathcal{M}(1,0,1)$, we shall obatin the following frequently used equality, which indicates the connection to acoustic system,
        \begin{equation}\label{v_dot_nabla_g}
            \begin{aligned}
            v \mdot \nabla_x g = \left(\begin{pmatrix}
                \nabla \!\cdot\! u\\
                \nabla (\rho + \theta)\\
                \frac{2}{3} \nabla \!\cdot\! u
            \end{pmatrix}
            \raisebox{-2.4ex}{,}
        \begin{pmatrix}
            1 \\
            v \\
            (\frac{|v|^2}{2} -\frac{3}{2})
        \end{pmatrix}\right)
        +A(v):\nabla_x u + B(v)\cdot {\nabla_x \theta},
            \end{aligned}
        \end{equation}
        where $g = \rho + u \cdot v + (\frac{|v|^2}{2} -\frac{3}{2})\theta$.
    \end{remark}
    Now using \eqref{temp_dt1}, \eqref{temp_dt3} and \eqref{Euler_operator}, we obtain
    \begin{equation}\label{temp_result_1}
        \begin{aligned}
            \int^t_0 \int_{\mathbb{T}^3_x} \int_{\mathbb{R}^3_v}  &\frac{d}{dt}(f_{\varepsilon} \log \frac{M_{\varepsilon}}{M} ) \ dx dv ds\\
            &=
            \frac{1}{\varepsilon}\int^t_0 \int_{\mathbb{T}^3_x} \int_{\mathbb{R}^3_v} \left(\begin{pmatrix}
                \varepsilon \partial_t \rho_{\varepsilon}+ u_{\varepsilon}\mdot \nabla_x \rho_{\varepsilon} + \rho_{\varepsilon} \nabla_x \mdot u_{\varepsilon}\\
                \varepsilon \partial_t u_{\varepsilon} + u_{\varepsilon} \mdot \nabla_x u_{\varepsilon}  + \frac{\theta_{\varepsilon}}{\rho_{\varepsilon}} \nabla_x \rho_{\varepsilon} + \nabla_x \theta_{\varepsilon}\\
                \varepsilon \partial_t \theta_{\varepsilon} + u_{\varepsilon} \mdot \nabla_x \theta_{\varepsilon} + \frac{2}{3} \theta_{\varepsilon}  \nabla_x \mdot u_{\varepsilon}
            \end{pmatrix}
            \raisebox{-2.4ex}{,}
            \begin{pmatrix}
                1/ \rho_{\varepsilon} \\
                V_\varepsilon / \sqrt{\theta_{\varepsilon}}\\
                \frac{1}{\theta_{\varepsilon}}(\frac{|V_\varepsilon|^2}{2} -\frac{3}{2})
            \end{pmatrix}\right)f_{\varepsilon}\ dx dv ds\\
            &+\frac{1}{\varepsilon}\int^t_0 \int_{\mathbb{T}^3_x} \int_{\mathbb{R}^3_v} A(V_\varepsilon):\nabla_x u_{\varepsilon} f_{\varepsilon} + B(V_\varepsilon)\cdot \frac{\nabla_x \theta_{\varepsilon}}{\sqrt{\theta_{\varepsilon}}} f_{\varepsilon}\ dx dv ds.
        \end{aligned}
    \end{equation}
    Here $V_{\varepsilon}= \frac{v-u_\varepsilon}{\sqrt{\theta_{\varepsilon}}}$. 
    Combing \eqref{relative_entropy}, \eqref{temp_result_1} and \eqref{longtime_CNS} yields that
\begin{equation}\label{temp_result_2}
    \begin{aligned}
    &\frac{1}{{\varepsilon}^2} H(f_{\varepsilon}| M_{\varepsilon})(t) +\frac{1}{\varepsilon^4}\int_0^t \int_{\mathbb{T}^3_x} D(f_\varepsilon)(s,x)\ ds dx\\
    &+ \frac{1}{\varepsilon^3}\int^t_0 \int_{\mathbb{T}^3_x} \int_{\mathbb{R}^3_v} \varepsilon \nabla_x \mdot [ \mu(\rho_{\varepsilon},\theta_{\varepsilon}) \sigma(u_{\varepsilon})] \mdot \frac{1}{\rho_{\varepsilon}}\frac{V_\varepsilon} {\sqrt{\theta_{\varepsilon}}} f_{\varepsilon} \\
    & + \frac{1}{\varepsilon^3}\int^t_0 \int_{\mathbb{T}^3_x} \int_{\mathbb{R}^3_v}\frac{1}{\rho_{\varepsilon}}\left(\varepsilon \frac{1}{3} \mu(\rho_{\varepsilon},\theta_{\varepsilon}) \sigma(u_{\varepsilon}):\sigma(u_{\varepsilon}) + \frac{5}{3} \varepsilon \nabla_x \mdot [\kappa(\rho_{\varepsilon},\theta_{\varepsilon}) \nabla_x \theta_{\varepsilon}]\right) f_{\varepsilon}
    \frac{1}{\theta_{\varepsilon}}(\frac{|V_\varepsilon|^2}{2} -\frac{3}{2}) \ dx dv ds
    \\
    &+\frac{1}{\varepsilon^3} \int^t_0 \int_{\mathbb{T}^3_x} \int_{\mathbb{R}^3_v} A(V_\varepsilon):\nabla_x u_{\varepsilon} f_{\varepsilon} + B(V_\varepsilon)\cdot \frac{\nabla_x \theta_{\varepsilon}}{\sqrt{\theta_{\varepsilon}}} f_{\varepsilon}  \ dx dv ds \leqslant \frac{1}{\varepsilon^2} H(f^{\mathrm{in}}_{\varepsilon}| M^{\mathrm{in}}_{\varepsilon} ).
    \end{aligned}
\end{equation}

Now we set $f_{\varepsilon} = M(1+ \varepsilon g_{\varepsilon})$ and $(\rho_{\varepsilon},\ u_{\varepsilon},\ \theta_{\varepsilon} )= (1 + \varepsilon\tilde{\rho}_{\varepsilon},\ \varepsilon \tilde{u}_{\varepsilon},\ 1+ \varepsilon \tilde{\theta}_{\varepsilon})$, and we denote
\begin{equation}\label{g_moments}
    \begin{aligned}
        \rho^b_{\varepsilon} = \langle g_{\varepsilon} \rangle,\
        u^b_{\varepsilon} =\langle  vg_{\varepsilon} \rangle,\
        \theta^b_{\varepsilon} = \langle  \frac{|v|^2 -3}{3} g_{\varepsilon} \rangle.
    \end{aligned}
\end{equation}
For notational simplicity, we will drop the subscript $\varepsilon$ in $V_{\varepsilon}$, $(\rho^b_{\varepsilon},u^b_{\varepsilon},\theta^b_{\varepsilon})$, $(\tilde{\rho}_{\varepsilon},\tilde{u}_{\varepsilon},\tilde{\theta}_{\varepsilon})$ and $(\rho_{\varepsilon},u_{\varepsilon},\theta_{\varepsilon})$ in the sequel.

Simple calculation then shows that
\begin{equation}\label{V_f}
    \begin{aligned}
        \int_{\mathbb{R}^3_v} \frac{V} {\sqrt{\theta}} f_{\varepsilon} dv =\int_{\mathbb{R}^3_v}\frac{v-\varepsilon\tilde{u}} {\theta} M(1+ \varepsilon g_{\varepsilon}) dv
        =\varepsilon\frac{( u^b -\tilde{u})}{\theta} - \varepsilon^2 \frac{\rho^b\tilde{u}}{\theta},
    \end{aligned}
\end{equation}

\begin{equation}\label{V_square_f}
    \begin{aligned}
        \int_{\mathbb{R}^3_v}  \frac{1}{\theta}(\frac{|V|^2}{2} -\frac{3}{2}) f_{\varepsilon} dv
        &=\int_{\mathbb{R}^3_v} \left(\frac{1}{\theta} \frac{(v-\varepsilon \tilde{u})^2}{2\theta} -\frac{3}{2\theta}\right) (1+ \varepsilon g_{\varepsilon})M dv\\
        &=\varepsilon \frac{1}{\theta^2} \frac{3}{2} (\theta^b -\theta) + \varepsilon^2 \frac{3}{2} \frac{\tilde{\theta \rho^b}}{\theta^2}
         - \varepsilon^2 \frac{\tilde{u} \cdot u^b}{\theta^2}+ \varepsilon^3 \frac{\tilde{u}^2 \rho^b}{2\theta^2},
    \end{aligned}
\end{equation}

\begin{equation}\label{AV_f}
    \begin{aligned}
        \int A(V) f_{\varepsilon} dv &= \frac{1}{\theta}\left(\varepsilon^2 (\tilde{u} \otimes \tilde{u} - \frac{\tilde{u}^2}{3}I) - \varepsilon^2 ( \tilde{u} \otimes u^b + u^b \otimes \tilde{u}) + \varepsilon^2 \frac{2\tilde{u}\cdot u^b}{3}I + \varepsilon^3 \rho^b  (\tilde{u} \otimes \tilde{u} - \frac{\tilde{u}^2}{3}I) \right) \\
        &+ \varepsilon \frac{1}{\theta}\langle A(v), g_{\varepsilon} \rangle.
    \end{aligned}
\end{equation}

\begin{equation}\label{BV_f}
    \begin{aligned}
       \int B(V) f_{\varepsilon} \frac{1}{\sqrt{\theta}} dv = \varepsilon^2 \frac{5}{2\theta^2} (\tilde{\theta} \tilde{u} - \tilde{\theta}u^b - \tilde{u} \theta^b ) + \varepsilon \frac{1}{\theta^2}\langle B(v), g_{\varepsilon} \rangle - \varepsilon^2 \frac{1}{\theta^2} \tilde{u} \langle A(v), g_{\varepsilon} \rangle \\
       + \varepsilon^3 ( -\frac{\tilde{u}^2\tilde{u}}{2\theta^2} + \frac{\tilde{u}^2 u^b}{2\theta^2}+ \frac{5\rho^b \tilde{\theta}\tilde{u}}{2\theta^2} + \frac{\tilde{u}\otimes \tilde{u} \cdot u^b}{\theta^2}) -\varepsilon^4\frac{\tilde{u}^2\tilde{u}\rho^b}{2\theta^2}.
    \end{aligned}
\end{equation}
Substituting \eqref{V_f}, \eqref{V_square_f}, \eqref{AV_f} and \eqref{BV_f} into \eqref{temp_result_2} yields that
\begin{equation}\label{temp_result_before_expand_AB}
    \begin{aligned}
    &\frac{1}{{\varepsilon}^2} H(f_{\varepsilon}| M_{\varepsilon})(t) +\frac{1}{\varepsilon^4}\int_0^t \int_{\mathbb{T}^3_x} D(f_\varepsilon)(s,x) \ ds dx\\
    &+ \int^t_0 \int_{\mathbb{T}^3_x}  \nabla_x \mdot [ \mu(\rho,\theta) \sigma(\tilde{u})] \mdot \frac{1}{\rho} \frac{u^b-\tilde{u}}{\theta} \ dx ds+ R_1\\
    & + \int^t_0 \int_{\mathbb{T}^3_x} \frac{1}{\rho}\left(\frac{5}{3}  \nabla_x \mdot [\kappa(\rho,\theta) \nabla_x \tilde{\theta}]\right) \frac{1}{\theta^2} \frac{3}{2}(\theta^b -\tilde{\theta}) \ dx ds  + R_2\\
    &+ \int^t_0 \int_{\mathbb{T}^3_x} \frac{1}{\theta} \left[ (\tilde{u}-u^b)\otimes(\tilde{u}-u^b) - \frac{1}{3} (\tilde{u} - u^b)^2 I -u^b \otimes u^b   + \frac{(u^b)^2}{3}I + \frac{1}{\varepsilon} \langle A(v), g_{\varepsilon}\rangle \right] : \nabla_x \tilde{u} \  dx ds +R_3 \\
    &+ \int^t_0 \int_{\mathbb{T}^3_x} \left[\frac{5}{2} \frac{1}{\theta^2} (\tilde{u}-u^b)(\tilde{\theta}-\theta^b) - \frac{5}{2} \frac{1}{\theta^2}u^b \theta^b + \frac{1}{\varepsilon} \frac{1}{\theta^2}\langle B(v),g_{\varepsilon} \rangle \right] \cdot \nabla_x \tilde{\theta}  \ dx ds + R_4 \leqslant \frac{1}{\varepsilon^2} H(f^{\mathrm{in}}_{\varepsilon}| M^{\mathrm{in}}_{\varepsilon} ),
    \end{aligned}
\end{equation}
where
\begin{equation}
    \begin{aligned}
        R_1 &= \varepsilon \int^t_0 \int_{\mathbb{T}^3_x} \frac{\rho^b \tilde{u}}{\rho\theta} \ dxds,\\
        R_2 &= \varepsilon \int^t_0 \int_{\mathbb{T}^3_x} \frac{1}{\rho} \frac{1}{3} \mu(\rho,\theta) \sigma(\tilde{u}):\sigma(\tilde{u})
        \left(  \frac{1}{\theta^2} \frac{3}{2} (\theta^b -\theta) + \varepsilon \frac{3}{2} \frac{\tilde{\theta \rho^b}}{\theta^2}
        - \varepsilon \frac{\tilde{u} \cdot u^b}{\theta^2}+ \varepsilon^2 \frac{\tilde{u}^2 \rho^b}{2\theta^2}\right)\ dxds\\
            &+ \varepsilon \int^t_0 \int_{\mathbb{T}^3_x}  \frac{1}{\rho}\left(\frac{5}{3}  \nabla_x \mdot [\kappa(\rho,\theta) \nabla_x \tilde{\theta}]\right)
            \left(  \frac{3}{2} \frac{\tilde{\theta \rho^b}}{\theta^2}
            -  \frac{\tilde{u} \cdot u^b}{\theta^2}+ \varepsilon \frac{\tilde{u}^2 \rho^b}{2\theta^2}\right) \ dxds ,\\
        R_3 &= \varepsilon \int^t_0 \int_{\mathbb{T}^3_x} \frac{\rho^b}{\theta}(\tilde{u}\otimes \tilde{u} -\frac{|\tilde{u}|^2}{3}I) : \nabla_x \tilde{u} \ dxds,\\
        R_4 &=\varepsilon \int^t_0 \int_{\mathbb{T}^3_x}   -\frac{\tilde{u}^2\tilde{u}}{2\theta^2} + \frac{\tilde{u}^2 u^b}{2\theta^2}+ \frac{5\rho^b \tilde{\theta}\tilde{u}}{2\theta^2} + \frac{\tilde{u}\otimes \tilde{u} \cdot u^b}{\theta^2} -\varepsilon\frac{\tilde{u}^2\tilde{u}\rho^b}{2\theta^2} \ dxds
        +\int^t_0 \int_{\mathbb{T}^3_x} -\langle A(v),g_{\varepsilon} \rangle dxds .
    \end{aligned}
\end{equation}
We now need to calculate the terms $\frac{1}{\varepsilon} \langle A(v),g_{\varepsilon} \rangle$ and $\frac{1}{\varepsilon}\langle B(v), g_{\varepsilon} \rangle$. Indeed, we have the following lemma:

\begin{lemma}\label{AvBv_asymptotics}
    \begin{equation}\label{Av_g}
        \begin{aligned}
        \frac{1}{\varepsilon}\langle A(v), g_{\varepsilon} \rangle &=-\mu \sigma(u^b) +\left(u^b \otimes u^b - \frac{(u^b)^2}{3}I\right) + R_A,\\
        \frac{1}{\varepsilon}\langle B(v), g_{\varepsilon} \rangle &=-\frac{5}{2}\kappa \nabla_x \theta^b +\frac{5}{2}u^b \theta^b  + R_B,
        \end{aligned}
    \end{equation}
    where
    \begin{equation}
        \begin{aligned}
            R_A =  2\langle \hat{A}(v), Q(\Loth g_{\varepsilon},\Lpre g_{\varepsilon})  \rangle + \langle \hat{A}(v), Q(\Loth g_{\varepsilon},\Loth g_{\varepsilon})  \rangle
            -\langle \hat{A}(v),v\mdot \nabla_x \Loth g_{\varepsilon} \rangle
            + \varepsilon \langle  \hat{A}(v), -\partial_t \Loth g_{\varepsilon} \rangle,\\
            R_B= 2\langle \hat{B}(v), Q(\Loth g_{\varepsilon},\Lpre g_{\varepsilon})  \rangle + \langle \hat{B}(v), Q(\Loth g_{\varepsilon},\Loth g_{\varepsilon})  \rangle
            -\langle \hat{B}(v),v\mdot \nabla_x \Loth g_{\varepsilon} \rangle
            + \varepsilon \langle  \hat{B}(v), -\partial_t \Loth g_{\varepsilon} \rangle.
        \end{aligned}
    \end{equation}
\end{lemma}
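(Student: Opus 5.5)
We use the self-adjointness of $\mathcal{L}$ and the fact that $A(v),B(v)\in\mathcal{N}^{\perp}$. Since $\mathcal{L}\hat A=A$ and $\mathcal{L}\hat B=B$ with $\hat A,\hat B\in\mathcal{N}^\perp$, we can write
\begin{equation}
\frac{1}{\varepsilon}\langle A(v),g_\varepsilon\rangle=\frac{1}{\varepsilon}\langle \mathcal{L}\hat A, g_\varepsilon\rangle=\langle \hat A,\tfrac{1}{\varepsilon}\mathcal{L}g_\varepsilon\rangle ,
\end{equation}
and similarly for $B$. We then substitute $\frac1\varepsilon\mathcal{L}g_\varepsilon$ from the perturbed equation \eqref{perturbed_Boltzmann}, namely $\frac{1}{\varepsilon}\mathcal{L}g_\varepsilon=Q(g_\varepsilon,g_\varepsilon)-v\mdot\nabla_x g_\varepsilon-\varepsilon\partial_t g_\varepsilon$. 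This gives
\begin{equation}
\frac{1}{\varepsilon}\langle A(v),g_\varepsilon\rangle=\langle\hat A,Q(g_\varepsilon,g_\varepsilon)\rangle-\langle\hat A,v\mdot\nabla_x g_\varepsilon\rangle-\varepsilon\langle\hat A,\partial_t g_\varepsilon\rangle .
\end{equation}
Next we split each occurrence of $g_\varepsilon$ as $\Lpre g_\varepsilon+\Loth g_\varepsilon$.

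In the time-derivative term, $\partial_t\Lpre g_\varepsilon\in\mathcal{N}$ is orthogonal to $\hat A\in\mathcal N^\perp$. Only $-\varepsilon\langle\hat A,\partial_t\Loth g_\varepsilon\rangle$ survives, and it belongs to $R_A$.

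In the transport term, we apply the linearized identity \eqref{v_dot_nabla_g} to $\Lpre g_\varepsilon=\rho^b+v\mdot u^b+(\frac{|v|^2}{2}-\frac32)\theta^b$. Some care is needed to match the normalization of $\theta^b$ in \eqref{g_moments} with the coefficient of $(\frac{|v|^2}{2}-\frac32)$; this is a routine check. The collision-invariant part is killed by orthogonality, which leaves $-\langle\hat A,A(v)\rangle:\nabla_x u^b-\langle \hat A,B(v)\rangle\mdot\nabla_x\theta^b$. By rotational invariance (the standard structure $\hat A(v)=a(|v|)A(v)$, $\hat B(v)=b(|v|)B(v)$), $\langle \hat A_{ij},B_k\rangle=0$ and $\langle \hat A_{ij},A_{kl}\rangle=\mu(\delta_{ik}\delta_{jl}+\delta_{il}\delta_{jk}-\frac23\delta_{ij}\delta_{kl})$, with $\mu$ normalized as in \eqref{local_viscosity_and_heat_conductivity} at $(1,1)$. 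Hence this piece equals $-\mu\sigma(u^b)$. For $B$ we get $\langle\hat B_i,B_j\rangle=\frac52\kappa\delta_{ij}$, giving $-\frac52\kappa\nabla_x\theta^b$. The remaining $-\langle\hat A,v\mdot\nabla_x\Loth g_\varepsilon\rangle$ goes into $R_A$.

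For the collision term we expand $Q(g,g)$ bilinearly. By the symmetry of $Q$, the cross terms combine to $2Q(\Loth g,\Lpre g)$, and together with $Q(\Loth g,\Loth g)$ these are the stated remainder pieces. The main point is the purely hydrodynamic piece $\langle\hat A,Q(\Lpre g_\varepsilon,\Lpre g_\varepsilon)\rangle$. Here we use the classical identity (Bardos–Golse–Levermore) that for $g\in\mathcal N$ one has $Q(g,g)=\frac12\mathcal{L}(\Loth(g^2))$. It follows from differentiating $C(\mathcal M,\mathcal M)=0$ twice along a family of Maxwellians. Then
\begin{equation}
\langle\hat A,Q(\Lpre g,\Lpre g)\rangle=\tfrac12\langle A,(\Lpre g)^2\rangle .
\end{equation}
A direct Gaussian moment computation for $(\rho^b+v\mdot u^b+(\frac{|v|^2}{2}-\frac32)\theta^b)^2$ tested against $A(v)$ gives $u^b\otimes u^b-\frac{|u^b|^2}{3}I$, since only the $(v\mdot u^b)^2$ term contributes. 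The same computation tested against $B(v)$ gives $\frac52 u^b\theta^b$, since only the cross term $2(v\mdot u^b)(\frac{|v|^2}{2}-\frac32)\theta^b$ contributes. Collecting terms yields \eqref{Av_g}.

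We expect the main obstacle to be justifying $Q(g,g)=\frac12\mathcal L\Loth(g^2)$ for $g\in\mathcal N$. We will prove it by setting $\mathcal M_s=\mathcal M(1+s\rho,su,1+s\theta)$, writing $\mathcal M_s=M(1+sg+\frac{s^2}{2}h+O(s^3))$ with $h-g^2\in\mathcal N$, and expanding $C(\mathcal M_s,\mathcal M_s)=0$ to second order in $s$. Since $\mathcal{L}$ annihilates $\mathcal N$, this gives $Q(g,g)=\frac12\mathcal{L}(g^2)=\frac12\mathcal L(\Loth g^2)$. The remaining difficulty is bookkeeping of constants in the moment computations.
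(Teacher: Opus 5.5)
Your proposal is correct and follows the paper's proof essentially step for step. You use self-adjointness to write $\langle A,g_\varepsilon\rangle=\langle\hat A,\mathcal L g_\varepsilon\rangle$, substitute the perturbed equation, split $g_\varepsilon=\Lpre g_\varepsilon+\Loth g_\varepsilon$, evaluate the transport piece with the tensor identities for $\langle\hat A\otimes A\rangle$ and $\langle\hat B\otimes B\rangle$, and evaluate the hydrodynamic collision piece via $Q(g,g)=\tfrac12\mathcal L(g^2)$ for $g\in\mathcal N$. The only difference is that you derive this identity by expanding $C(\mathcal M_s,\mathcal M_s)=0$ to second order instead of citing Bardos--Golse--Levermore, and you keep the correct factor $\tfrac12$, which the paper's Lemma \ref{Qgg_Lg} omits in its statement although its subsequent computation uses it.
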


Lemma \ref{AvBv_asymptotics} and \eqref{temp_result_before_expand_AB} then leads to

    \begin{equation}\label{temp_relative_entropy}
        \begin{aligned}
        &\frac{1}{{\varepsilon}^2} H(f_{\varepsilon}| M_{\varepsilon})(t) +\frac{1}{\varepsilon^4}\int_0^t \int_{\mathbb{T}^3_x} D(f_\varepsilon)(s,x)\ ds dx + \tilde{R} \\
        &+ \int^t_0 \int_{\mathbb{T}^3_x}  \nabla_x [ \mu(\rho,\theta) \sigma(\tilde{u})] \frac{1}{\rho} \frac{u^b-\tilde{u}}{\theta} - \frac{1}{\theta} \mu \sigma(u^b):\nabla_x \tilde{u}  \ dx ds  \\
        & + \int^t_0 \int_{\mathbb{T}^3_x} \frac{1}{\rho}\left(\frac{5}{2}  \nabla_x [\kappa(\rho,\theta) \nabla_x \tilde{\theta}]\right) \frac{1}{\theta^2} (\theta^b -\tilde{\theta}) -\frac{1}{\theta^2}\frac{5}{2}\kappa \nabla_x \theta^b \cdot \nabla_x \tilde{\theta} \ dx ds  \\
        &+ \int^t_0 \int_{\mathbb{T}^3_x} \frac{1}{\theta} \left[ (\tilde{u}-u^b)\otimes(\tilde{u}-u^b)  - \frac{1}{3} (\tilde{u} - u^b)^2 I   \right] : \nabla_x \tilde{u} \  dx ds  \\
        &+ \int^t_0 \int_{\mathbb{T}^3_x} \left[\frac{5}{2} \frac{1}{\theta^2} (\tilde{u}-u^b)(\tilde{\theta}-\theta^b) \right] \cdot \nabla_x \tilde{\theta}  \ dx ds    \\
        &\leqslant \frac{1}{\varepsilon^2} H(f^{\mathrm{in}}_{\varepsilon}| M^{\mathrm{in}}_{\varepsilon} ),
        \end{aligned}
    \end{equation}
where
\begin{equation}
    \begin{aligned}
        \tilde{R} = R_1 + R_2 + R_3 + R_4 + \int^t_0 \int_{\mathbb{T}^3_x} R_A : \frac{\nabla_x \tilde{u}}{\theta} + R_B \cdot \frac{\nabla_x \tilde{\theta}}{\theta^2} \ dxds.
    \end{aligned}
\end{equation}
And we denote
\begin{align}
    & \text{\Rmnum{1}}=\int^t_0 \int_{\mathbb{T}^3_x}  \nabla_x [ \mu(\rho,\theta) \sigma(\tilde{u})] \frac{1}{\rho} \frac{u^b-\tilde{u}}{\theta} - \frac{1}{\theta} \mu \sigma(u^b):\nabla_x \tilde{u}  \ dx ds  \label{viscous_term},\\
    & \text{\Rmnum{2}}= \int^t_0 \int_{\mathbb{T}^3_x} \frac{1}{\rho}\left(\frac{5}{2}  \nabla_x [\kappa(\rho,\theta) \nabla_x \tilde{\theta}]\right) \frac{1}{\theta^2} (\theta^b -\tilde{\theta}) -\frac{1}{\theta^2}\frac{5}{2}\kappa \nabla_x \theta^b \cdot \nabla_x \tilde{\theta} \ dx ds \label{heat_conduct_term} ,\\
    & \text{\Rmnum{3}}= \int^t_0 \int_{\mathbb{T}^3_x} \frac{1}{\theta} \left[ (\tilde{u}-u^b)\otimes(\tilde{u}-u^b)  - \frac{1}{3} (\tilde{u} - u^b)^2 I   \right] : \nabla_x \tilde{u} \  dx ds   \label{u_convection_term}, \\
    & \text{\Rmnum{4}}= \int^t_0 \int_{\mathbb{T}^3_x} \left[\frac{5}{2} \frac{1}{\theta^2} (\tilde{u}-u^b)(\tilde{\theta}-\theta^b) \right] \cdot \nabla_x \tilde{\theta}  \ dx ds   \label{theta_convection_term} .\\
    \end{align}
For the term \Rmnum{1}, by symmetry of $\sigma(\tilde{u})$ and integration by parts, we have
\begin{equation}\label{integration_by_part_u}
    \begin{aligned}
        \int^t_0 \int_{\mathbb{T}^3_x} &\nabla_x \mdot [ \mu(\rho,\theta) \sigma(\tilde{u})] \mdot \frac{1}{\rho} \frac{u^b-\tilde{u}}{\theta} - \frac{1}{\theta} \mu \sigma(u^b):\nabla_x \tilde{u}  \ dx ds\\
        &= \int^t_0 \int_{\mathbb{T}^3_x}\frac{1}{2} \mu \sigma(\tilde{u}-u^b):\sigma(\tilde{u}-u^b) - \frac{1}{2} \mu \sigma(u^b):\sigma(u^b) \ dxds + R_5,
    \end{aligned}
\end{equation}
where
\begin{equation}
    \begin{aligned}
        R_5&= \int^t_0 \int_{\mathbb{T}^3_x}  \nabla_x \mdot [ \mu(\rho,\theta) \sigma(\tilde{u})] \mdot (\frac{-\varepsilon\tilde{\rho} -\varepsilon \tilde{\theta} -\varepsilon^2 \tilde{\rho}\tilde{\theta}}{\rho \theta})(u^b-\tilde{u})+ (\frac{\varepsilon\theta}{\theta})\mu \sigma(u^b):\nabla_x \tilde{u} \ dxds\\
           &+\int^t_0 \int_{\mathbb{T}^3_x} \frac{1}{2} \left( \mu(\rho,\theta) - \mu \right)\sigma(\tilde{u}):\sigma(\tilde{u}-u^b) \ dxds.
    \end{aligned}
\end{equation}
For the term II, same reasoning shows that
\begin{equation}\label{integration_by_part_theta}
    \begin{aligned}
        \int^t_0 \int_{\mathbb{T}^3_x} \frac{1}{\rho}\left(\frac{5}{2}  \nabla_x \mdot [\kappa(\rho,\theta) \nabla_x \tilde{\theta}]\right) \frac{1}{\theta^2} (\theta^b -\tilde{\theta}) -\frac{1}{\theta^2}\frac{5}{2}\kappa \nabla_x \theta^b \cdot \nabla_x \tilde{\theta} \ dx ds \\
        = \int^t_0 \int_{\mathbb{T}^3_x} \frac{5}{2} \kappa(\nabla_x \tilde{\theta} -\nabla_x \theta^b)^2 -\frac{5}{2}\kappa( \nabla_x \theta^b )^2 \ dxds + R_6,
    \end{aligned}
\end{equation}
where
\begin{equation}
    \begin{aligned}
        R_6&= \int^t_0 \int_{\mathbb{T}^3_x} (\frac{1}{\rho \theta^2}-1 ) \frac{5}{2} \nabla_x \mdot [\kappa(\rho,\theta) \nabla_x \tilde{\theta} ](\theta^b -\tilde{\theta}) + (1-\frac{1}{\theta^2})\kappa \nabla_x \theta^b \cdot \nabla_x \tilde{\theta}\ dxds\\
         &+ \int^t_0 \int_{\mathbb{T}^3_x} \frac{5}{2} (\kappa - \kappa(\rho,\theta))\nabla_x \tilde{\theta}\cdot (\nabla_x \tilde{\theta}-\nabla_x \theta^b) \ dxds.
    \end{aligned}
\end{equation}

\subsection{Relative entropy control}\label{SS_REC}
The terms \Rmnum{3} and \Rmnum{4} are unsigned, however they can be controlled by the quadratic of $(\rho^b-\tilde{\rho}^{\varepsilon}, u^b - \tilde{u}^{\varepsilon},\theta^b - \tilde{\theta}^{\varepsilon})$ which  can be controlled by relative entropy $\frac{1}{\varepsilon^2} H(f_{\varepsilon}|M_{\varepsilon})$.
In fact, we have the following two lemmas.
\begin{lemma} \label{same_moments}
    \begin{equation}
        \begin{aligned}
            H(f_{\varepsilon}|M_{\varepsilon}) = H(f_{\varepsilon}| M_{f_\varepsilon}) + H(M_{f_{\varepsilon}}|M_{\varepsilon}),
        \end{aligned}
    \end{equation}
    where $M_{f_{\varepsilon}}=M(\rho^f,u^f,\theta^f)$ is a local Maxwellian sharing the same moments with $f_{\varepsilon}$, i.e.,
    \begin{equation}
        \begin{aligned}
            \int_{\mathbb{R}^3_v} f_{\varepsilon} -M_{f_{\varepsilon}} dv =0, \\
            \int_{\mathbb{R}^3_v} v\left(f_{\varepsilon} -M_{f_{\varepsilon}}\right) dv = 0,\\
            \int_{\mathbb{R}^3_v} |v|^2\left(f_{\varepsilon} -M_{f_{\varepsilon}}\right) dv = 0.
        \end{aligned}
    \end{equation}
\end{lemma}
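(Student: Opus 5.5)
The plan is to expand both sides directly from the definition
\[
H(f|g)=\iint f\log\tfrac{f}{g}-f+g\,dxdv
\]
and to use the fact that $\log M_{f_\varepsilon}-\log M_\varepsilon$ is a collision invariant in $v$. We first write
\begin{equation}
\begin{aligned}
H(f_\varepsilon|M_\varepsilon)-H(f_\varepsilon|M_{f_\varepsilon})
&=\iint f_\varepsilon\log\frac{M_{f_\varepsilon}}{M_\varepsilon}-M_{f_\varepsilon}+M_\varepsilon\,dxdv,\\
H(M_{f_\varepsilon}|M_\varepsilon)
&=\iint M_{f_\varepsilon}\log\frac{M_{f_\varepsilon}}{M_\varepsilon}-M_{f_\varepsilon}+M_\varepsilon\,dxdv.
\end{aligned}
\end{equation}
In the first line the terms $-f_\varepsilon$ cancel between the two relative entropies. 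The term $+M_{f_\varepsilon}$ appears with a minus sign in the difference, and the term $+M_\varepsilon$ survives. Subtracting the two lines shows that the claimed identity is equivalent to
\[
\iint (f_\varepsilon-M_{f_\varepsilon})\log\frac{M_{f_\varepsilon}}{M_\varepsilon}\,dxdv=0 .
\]

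The key observation is that for two Maxwellians,
\[
\log\frac{\mathcal M(\rho^f,u^f,\theta^f)}{\mathcal M(\rho,u,\theta)}
=\log\frac{\rho^f\theta^{3/2}}{\rho(\theta^f)^{3/2}}-\frac{|v-u^f|^2}{2\theta^f}+\frac{|v-u|^2}{2\theta},
\]
which is, for each fixed $(t,x)$, of the form $a(t,x)+b(t,x)\cdot v+c(t,x)|v|^2$. This is the same fact the paper already used when deriving \eqref{temp_dt2}. Integrating in $v$ first at fixed $(t,x)$, the moment conditions defining $M_{f_\varepsilon}$ (equal mass, momentum and energy) kill each of the three pieces. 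Integrating afterwards in $x$ gives zero, which proves the identity.

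The only points needing care are technical. The first is existence of $M_{f_\varepsilon}$, i.e.\ that the moments of $f_\varepsilon$ determine $\rho^f>0$, $\theta^f>0$; this holds since $f_\varepsilon\geq 0$ is nontrivial, by Cauchy--Schwarz (energy exceeds $|{\rm momentum}|^2/{\rm mass}$). The second is integrability of $f_\varepsilon(1+|v|^2)$ so that Fubini applies. At the formal level of the paper both are assumed, so there is no real obstacle; the main "step" is simply recognizing the quadratic-polynomial structure of the log-ratio.
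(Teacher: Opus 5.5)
Your proof is correct and is essentially the paper's argument. The paper also inserts $\pm f_\varepsilon\log M_{f_\varepsilon}$ and uses that $\log M_{f_\varepsilon}$ and $\log M_\varepsilon$ are each combinations of $1,v,|v|^2$, so the matching moments let $f_\varepsilon$ be replaced by $M_{f_\varepsilon}$ in those terms; you simply merge its two identities into the single statement $\iint (f_\varepsilon-M_{f_\varepsilon})\log(M_{f_\varepsilon}/M_\varepsilon)\,dx\,dv=0$.
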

\begin{lemma}\label{difference_moments_control}
    \begin{equation}
        \begin{aligned}
            \frac{1}{\varepsilon^2}H(M_{f_{\varepsilon}}|M_{\varepsilon}) = \frac{1}{2}\left( \left( \rho^b -\tilde{\rho}^{\varepsilon}\right)^2+ \frac{3}{2}\left( \theta^b -\tilde{\theta}^{\varepsilon}\right)^2 + \left( u^b -\tilde{u}^{\varepsilon}\right)^2 \right) + R_8 + R_9+ R_{10},
        \end{aligned}
    \end{equation}
    where $R_8$, $R_9$ and $R_{10}$ are expected to tend to zero as $\varepsilon$ tend to zero. The explicit form of $R_8$, $R_9$ and $R_{10}$ will be given in the proof of this lemma.
\end{lemma}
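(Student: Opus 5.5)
The plan is to compute $H(M_{f_\varepsilon}|M_\varepsilon)$ exactly, using the closed form of the relative entropy between two Maxwellians, and then Taylor expand in $\varepsilon$. Everything is explicit, and the remainders $R_8,R_9,R_{10}$ are the Taylor remainders coming from the density, temperature and velocity contributions respectively. I read the quadratic part of the statement as integrated over $\mathbb{T}^3_x$, as in the definition of $H$.

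\textbf{Step 1: closed form.} For $\mathcal{M}_i=\mathcal{M}(\rho_i,u_i,\theta_i)$, $i=1,2$, we have
\begin{equation}
\log\frac{\mathcal{M}_1}{\mathcal{M}_2}=\log\frac{\rho_1}{\rho_2}+\frac32\log\frac{\theta_2}{\theta_1}-\frac{|v-u_1|^2}{2\theta_1}+\frac{|v-u_2|^2}{2\theta_2}.
\end{equation}
Integrating against $\mathcal{M}_1$ gives
\begin{equation}
H(\mathcal{M}_1|\mathcal{M}_2)=\int_{\mathbb{T}^3_x}\Big[\rho_2\,\phi\big(\tfrac{\rho_1}{\rho_2}\big)+\tfrac32\rho_1\,\phi\big(\tfrac{\theta_1}{\theta_2}\big)+\rho_1\frac{|u_1-u_2|^2}{2\theta_2}\Big]dx,
\end{equation}
where $\phi(z)=z-1-\log z\geqslant0$ for the temperature term. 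For the density term I use the form $\rho_1\log(\rho_1/\rho_2)-\rho_1+\rho_2=\rho_2\,\psi(\rho_1/\rho_2)$ with $\psi(z)=z\log z-z+1$.

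\textbf{Step 2: moments of $M_{f_\varepsilon}$.} From $f_\varepsilon=M(1+\varepsilon g_\varepsilon)$ and \eqref{g_moments}, the moment identities in Lemma \ref{same_moments} give
\begin{equation}
\rho^f=1+\varepsilon\rho^b,\qquad \rho^f u^f=\varepsilon u^b,\qquad \rho^f(|u^f|^2+3\theta^f)=3+3\varepsilon(\rho^b+\theta^b).
\end{equation}
Hence
\begin{equation}
u^f=\frac{\varepsilon u^b}{1+\varepsilon\rho^b},\qquad \theta^f=\frac{1+\varepsilon(\rho^b+\theta^b)}{1+\varepsilon\rho^b}-\frac{\varepsilon^2|u^b|^2}{3(1+\varepsilon\rho^b)^2},
\end{equation}
so that $\theta^f=1+\varepsilon\theta^b+O(\varepsilon^2)$. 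We substitute these, together with $(\rho,u,\theta)=(1+\varepsilon\tilde\rho,\varepsilon\tilde u,1+\varepsilon\tilde\theta)$, into Step 1 with $\mathcal{M}_1=M_{f_\varepsilon}$ and $\mathcal{M}_2=M_\varepsilon$.

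\textbf{Step 3: expansion.} Since $\psi(z)=\tfrac12(z-1)^2+O(|z-1|^3)$ and $\phi(z)=\tfrac12(z-1)^2+O(|z-1|^3)$, we get:
\begin{itemize}
\item $\rho^f/\rho-1=\varepsilon(\rho^b-\tilde\rho)+O(\varepsilon^2)$, so the density term is $\tfrac{\varepsilon^2}{2}(\rho^b-\tilde\rho)^2+\varepsilon^2R_8$;
\item $\theta^f/\theta-1=\varepsilon(\theta^b-\tilde\theta)+O(\varepsilon^2)$, so the temperature term is $\tfrac{3\varepsilon^2}{4}(\theta^b-\tilde\theta)^2+\varepsilon^2R_9$;
\item $u^f-u=\varepsilon(u^b-\tilde u)-\varepsilon^2\rho^b u^b+O(\varepsilon^3)$, so the velocity term is $\tfrac{\varepsilon^2}{2}|u^b-\tilde u|^2+\varepsilon^2R_{10}$.
\end{itemize}
I will write each of $R_8,R_9,R_{10}$ explicitly, using the integral form of the Taylor remainder. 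For example,
\begin{equation}
\psi(z)-\tfrac12(z-1)^2=-\tfrac12(z-1)^3\int_0^1\frac{(1-s)^2}{1+s(z-1)}\,ds.
\end{equation}
The remainders also contain the lower-order cross terms generated by the $O(\varepsilon^2)$ corrections in $\rho^f,u^f,\theta^f$ and by the prefactors $\rho_1,\rho_2,1/\theta_2=1+O(\varepsilon)$. Each of them then carries an explicit factor of $\varepsilon$ times a polynomial expression in $(\rho^b,u^b,\theta^b,\tilde\rho,\tilde u,\tilde\theta)$.

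\textbf{Main obstacle.} The delicate point is showing that $R_8,R_9,R_{10}\to0$. The remainders are cubic and involve $\rho^b,u^b,\theta^b$, which are controlled only through the entropy bound \eqref{entropy_inequality}, plus denominators such as $1+s(z-1)$ that require $\rho^f,\theta^f$ to stay bounded away from $0$. At the formal level I will assume uniform bounds on $g_\varepsilon$ and on $(\tilde\rho,\tilde u,\tilde\theta)$, as in the perturbative framework of the Remark after Theorem \ref{main}. Under these bounds $|R_8|+|R_9|+|R_{10}|=O(\varepsilon)$, with constants depending on $L^\infty$ bounds of the moments. I will also keep the cubic remainders in a form bounded by $\varepsilon$ times the quadratic quantity itself. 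This is the form needed later for the Grönwall argument in Section \ref{SS_Conclusion}.
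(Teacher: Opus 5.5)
Your proposal is correct and follows essentially the same route as the paper. The paper also computes $\rho_{f_\varepsilon}=1+\varepsilon\rho^b$, $u_{f_\varepsilon}=\varepsilon u^b+\varepsilon^2 r_1$, $\theta_{f_\varepsilon}=1+\varepsilon\theta^b+\varepsilon^2 r_2$, writes $\varepsilon^{-2}H(M_{f_\varepsilon}|M_\varepsilon)$ in closed form as a density part, a temperature part and a velocity part, and Taylor expands each one, with $R_8$, $R_9$, $R_{10}$ the corresponding $\mathcal{O}(\varepsilon)$ remainders under uniform bounds.

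One caveat on your closing remark, which goes beyond what the lemma needs. The remainders cannot be bounded by $\varepsilon$ times the quadratic quantity alone. At $(\rho^b,u^b,\theta^b)=(\tilde\rho,\tilde u,\tilde\theta)$ one has $u^{f}-\varepsilon\tilde u=-\varepsilon^2\tilde\rho\tilde u/(1+\varepsilon\tilde\rho)$, so $\varepsilon^{-2}H(M_{f_\varepsilon}|M_\varepsilon)$ is generally nonzero and of order $\varepsilon^2$ while the quadratic part vanishes. What you actually get is $\varepsilon$ times the quadratic quantity plus $\mathcal{O}(\varepsilon)$, and that is all the Gr\"onwall step needs. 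Also, the integral remainder for $\psi$ should have $(1+s(z-1))^2$ in the denominator.
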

Note that the relative entropy is always positive, Lemma \ref{same_moments} and \ref{difference_moments_control} then implies
\begin{equation}\label{convection_control}
    \begin{aligned}
        \int^t_0 \int_{\mathbb{T}^3_x} \frac{1}{\theta} & \left[ (\tilde{u}-u^b)\otimes(\tilde{u}-u^b)  - \frac{1}{3} (\tilde{u} - u^b)^2 I   \right] : \nabla_x \tilde{u} +
        \left[\frac{5}{2} \frac{1}{\theta^2} (\tilde{u}-u^b)(\tilde{\theta}-\theta^b) \right] \cdot \nabla_x \tilde{\theta}  \ dx ds   \\
        &\leqslant C(\| \theta \|_{L^{\infty}(dtdx)} ) \int^t_0 \int_{\mathbb{T}^3_x} \| (\nabla_x \tilde{u},\nabla_x \tilde{\theta})\|_{L^{\infty}(dx)} \left[\left(\tilde{u}-u^b\right)^2 + \left(\tilde{\theta} - \theta^b \right)^2\right] \ dxds\\
        &\leqslant C(\| \theta \|_{L^{\infty}(dtdx)} ) \int^t_0   \| (\nabla_x \tilde{u},\nabla_x \tilde{\theta})\|_{L^{\infty}(dx)} \frac{1}{\varepsilon^2} H(f_{\varepsilon}|M_{\varepsilon}) \ ds +R_7,
    \end{aligned}
\end{equation}
where
\begin{equation}
    \begin{aligned}
        R_7=C(\| \theta \|_{L^{\infty}(dtdx)} )\varepsilon \int^t_0 \int_{\mathbb{T}^3_x}    \| (\nabla_x \tilde{u},\nabla_x \tilde{\theta})\|_{L^{\infty}(dx)} ( R_8 + R_9 +R_{10}) \ dx ds,
    \end{aligned}
\end{equation}
and $C(\| \theta \|_{L^{\infty}(dtdx)} )$ is a constant depends only on $\| \theta \|_{L^{\infty}(dtdx)}$.

Combing now \eqref{temp_relative_entropy},  \eqref{integration_by_part_u}, \eqref{integration_by_part_theta} and \eqref{convection_control} leads to
\begin{equation} \label{mid_result}
    \begin{aligned}
        \frac{1}{{\varepsilon}^2} &H(f_{\varepsilon}| M_{\varepsilon})(t) +\int_0^t \int_{\mathbb{T}^3_x}\frac{1}{\varepsilon^4}D(f_\varepsilon)(s,x) - \frac{1}{2} \mu \sigma(u^b): \sigma(u^b) - \frac{5}{2}\kappa (\nabla_x \theta^b)^2\  dxds \\
        &+\int_0^t \int_{\mathbb{T}^3_x} \frac{1}{2}\mu \sigma(\tilde{u}-u^b):\sigma(\tilde{u}-u^b) + \frac{5}{2}  \kappa(\nabla_x \tilde{\theta} -\nabla_x \theta^b)^2 \ dxds\\
        &\lesssim \frac{1}{\varepsilon^2} H(f^{\mathrm{in}}_{\varepsilon}| M^{\mathrm{in}}_{\varepsilon}) + \int^t_0 \|(\nabla_x \tilde{u},\nabla_x \tilde{\theta})\|_{L^{\infty}(dx)} \frac{1}{\varepsilon^2} H(f_{\varepsilon}|M_{\varepsilon}) ds +\tilde{R} + R_5+R_6+R_7.
    \end{aligned}
\end{equation}

\subsection{Entropy dissipation control}\label{SS_EDC}
 We now deal with the term
\begin{equation}\label{dissipation_control}
    \begin{aligned}
        \int_0^t \int_{\mathbb{T}^3_x}\frac{1}{\varepsilon^4}D(f_\varepsilon)(s,x) - \frac{1}{2} \mu \sigma(u^b): \sigma(u^b) - \frac{5}{2}\kappa (\nabla_x \theta^b)^2\  dxds ,
    \end{aligned}
\end{equation}
which means that we need to use the entropy dissipation to control the viscosity and heat-conductivity determined by the fluid part of the Boltzmann equation. More specifically, we want to show that
this term can be decomposed as some positive terms and some other terms that tend to zero. In fact, we shall prove the following inequality

\begin{equation} \label{entropy_dissipation_control}
    \begin{aligned}
        \int_0^t &\int_{\mathbb{T}^3_x}\frac{1}{\varepsilon^4}D(f_\varepsilon)(s,x) - \frac{1}{2} \mu \sigma(u^b): \sigma(u^b) - \frac{5}{2}\kappa (\nabla_x \theta^b)^2 \  dxds -\mathcal{O}(\varepsilon) \geqslant 0.
    \end{aligned}
\end{equation}
For convenience of notation, we define

\begin{equation}
    \begin{aligned}
        q_{\varepsilon} := \frac{G^{\prime}_{\varepsilon}G^{\prime}_{\varepsilon 1}-G_{\varepsilon} G_{\varepsilon 1}}{\varepsilon^2}.
    \end{aligned}
\end{equation}
And we have the following lemma
\begin{lemma}\label{D_q_equivlence}
    \begin{equation}
        \int^t_0 \int_{\mathbb{T}^3_x} \frac{1}{\varepsilon^4} D(f_{\varepsilon})\ dxds = \frac{1}{4}  \int^t_0 \int_{\mathbb{T}^3_x} \langle \langle q_{\varepsilon}^2  \rangle \rangle \ dx ds + R_{11},
    \end{equation}
    where
    \begin{equation}
        \begin{aligned}
            R_{11}= \int^t_0 \int_{\mathbb{T}^3_x}
             \langle \langle-\frac{1}{2} q_{\varepsilon}^2 \left(\varepsilon g^{\prime}_{\varepsilon 1} + \varepsilon g^{\prime}_{\varepsilon} + \varepsilon^2 g^{\prime}_{\varepsilon } g^{\prime}_{\varepsilon 1} + \varepsilon g^{\prime}_{\varepsilon 1} + \varepsilon g^{\prime}_{\varepsilon} + \varepsilon^2 g^{\prime}_{\varepsilon } g^{\prime}_{\varepsilon 1} \right) + \frac{1}{\varepsilon^2}r_3 q_{\varepsilon} - \frac{1}{\varepsilon^2}r_4 q_{\varepsilon} \rangle \rangle \ dxds
        \end{aligned}
    \end{equation}
\end{lemma}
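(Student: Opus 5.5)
The identity is exact. It comes from a second-order Taylor expansion of the logarithm around $1$, applied to the normalized products $G_\varepsilon G_{\varepsilon 1}$ and $G^{\prime}_{\varepsilon}G^{\prime}_{\varepsilon 1}$, where $G_\varepsilon := f_\varepsilon/M = 1+\varepsilon g_\varepsilon$.

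\textbf{Step 1: rewrite $D$ in terms of $G_\varepsilon$.} Energy and momentum conservation in a binary collision give $|v^{\prime}|^2+|v^{\prime}_1|^2=|v|^2+|v_1|^2$, hence $M^{\prime}M^{\prime}_1 = MM_1$. Therefore
\begin{equation}
f^{\prime}_\varepsilon f^{\prime}_{\varepsilon 1}-f_\varepsilon f_{\varepsilon 1} = MM_1\,(G^{\prime}_\varepsilon G^{\prime}_{\varepsilon 1}-G_\varepsilon G_{\varepsilon 1}),
\qquad
\frac{f^{\prime}_\varepsilon f^{\prime}_{\varepsilon 1}}{f_\varepsilon f_{\varepsilon 1}}=\frac{G^{\prime}_\varepsilon G^{\prime}_{\varepsilon 1}}{G_\varepsilon G_{\varepsilon 1}} .
\end{equation}
Using the definition of $\langle\!\langle\cdot\rangle\!\rangle$, this turns $D$ into
\begin{equation}
D(f_\varepsilon)=\tfrac14\langle\!\langle (a-b)\log\tfrac{a}{b}\rangle\!\rangle,
\qquad a:=G^{\prime}_\varepsilon G^{\prime}_{\varepsilon 1},\quad b:=G_\varepsilon G_{\varepsilon 1}.
\end{equation}
By definition $a-b=\varepsilon^2 q_\varepsilon$.

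\textbf{Step 2: expand the logarithm.} Write $\log x=(x-1)-\tfrac12(x-1)^2+r(x)$, where $r(x)$ is the cubic Taylor remainder, and set $r_3:=r(a)$, $r_4:=r(b)$. Since $(a-1)^2-(b-1)^2=(a-b)(a-1+b-1)$, we get
\begin{equation}
\log a-\log b=(a-b)-\tfrac12 (a-b)\big[(a-1)+(b-1)\big]+r_3-r_4 .
\end{equation}
Multiplying by $a-b=\varepsilon^2 q_\varepsilon$ gives
\begin{equation}
(a-b)\log\tfrac ab=\varepsilon^4 q_\varepsilon^2-\tfrac12\varepsilon^4 q_\varepsilon^2\big[(a-1)+(b-1)\big]+\varepsilon^2 q_\varepsilon (r_3-r_4).
\end{equation}

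\textbf{Step 3: substitute the expansions of $a-1$ and $b-1$.} From $G_\varepsilon = 1+\varepsilon g_\varepsilon$ we have
\begin{equation}
a-1=\varepsilon g^{\prime}_\varepsilon+\varepsilon g^{\prime}_{\varepsilon 1}+\varepsilon^2 g^{\prime}_\varepsilon g^{\prime}_{\varepsilon 1},
\qquad
b-1=\varepsilon g_\varepsilon+\varepsilon g_{\varepsilon 1}+\varepsilon^2 g_\varepsilon g_{\varepsilon 1}.
\end{equation}
Insert these, divide by $\varepsilon^4$, and integrate in $(s,x)$. The leading term is $\tfrac14\langle\!\langle q_\varepsilon^2\rangle\!\rangle$. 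Everything else is collected into $R_{11}$: the cubic term $-\tfrac12 q_\varepsilon^2\big[(a-1)+(b-1)\big]$ and the remainder term $\varepsilon^{-2}q_\varepsilon(r_3-r_4)$.

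In the stated form of $R_{11}$, the second group of $g$-terms should read as the unprimed ones, $\varepsilon g_\varepsilon+\varepsilon g_{\varepsilon 1}+\varepsilon^2 g_\varepsilon g_{\varepsilon 1}$; as printed it duplicates the primed group. The overall factor $\tfrac14$ from $D$ must also be carried through, so the constants in $R_{11}$ have to be checked against this bookkeeping.

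\textbf{Expected obstacle.} The identity itself is purely algebraic, so the real difficulty is not here. It lies in the subsequent use of the lemma: showing $R_{11}=\mathcal O(\varepsilon)$. This requires that $\varepsilon g_\varepsilon$ be small in $L^\infty$, which is available in the classical perturbative framework, so that $a,b$ stay near $1$. Then one has $|r_3|\lesssim|a-1|^3\lesssim\varepsilon^3|g|^3$, and similarly for $r_4$. Consequently $\varepsilon^{-2}q_\varepsilon(r_3-r_4)$ is of order $\varepsilon\,|q_\varepsilon|\,|g|^3$, and it is controlled by $\langle\!\langle q_\varepsilon^2\rangle\!\rangle$ together with uniform moment bounds on $g_\varepsilon$. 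At the formal level of the paper, it suffices to record these remainders explicitly.
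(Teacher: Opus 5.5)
Your proof is correct and follows essentially the same route as the paper. You use $M'M'_1=MM_1$ to write $\varepsilon^{-4}D(f_\varepsilon)=\frac14\langle\!\langle q_\varepsilon\,\varepsilon^{-2}(\log G'_\varepsilon G'_{\varepsilon 1}-\log G_\varepsilon G_{\varepsilon 1})\rangle\!\rangle$, expand each logarithm to second order with cubic remainders $r_3,r_4$, and apply $(a-1)^2-(b-1)^2=(a-b)(a+b-2)$. Both of your corrections to the printed $R_{11}$ are also right: the second group of $g$-terms should be the unprimed $\varepsilon g_\varepsilon+\varepsilon g_{\varepsilon 1}+\varepsilon^2 g_\varepsilon g_{\varepsilon 1}$, and the whole integrand of $R_{11}$ should carry the factor $\frac14$ coming from $D$.
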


On the other hand, we have the following lemma
\begin{lemma}\cite{BGL2}
    Any $q$ satisfies the inequality
    \begin{equation}
        \begin{aligned}
            \frac{1}{2}\frac{1}{\mu}\langle\!\langle  \hat{A} q \rangle\!\rangle :\langle\!\langle  \hat{A} q \rangle\!\rangle + \frac{2}{5}\frac{1}{\kappa}\langle\!\langle  \hat{B} q \rangle\!\rangle \cdot \langle\!\langle  \hat{B} q \rangle\!\rangle
            \leqslant \frac{1}{4} \langle\!\langle   q^2 \rangle\!\rangle,
        \end{aligned}
    \end{equation}
    if $\langle\!\langle   q^2 \rangle\!\rangle$ is bounded.
\end{lemma}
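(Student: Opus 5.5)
The plan is a duality (Cauchy--Schwarz) argument, tested against the Chapman--Enskog profile built from $\hat A$ and $\hat B$. Throughout, $q$ is understood as in Lemma \ref{D_q_equivlence}: a function of $(v,v_1,\sigma)$ with the symmetries of $q_\varepsilon$, namely $q(v_1,v)=q(v,v_1)$ and $q(v',v_1')=-q(v,v_1)$. Since $\langle\!\langle \hat A q\rangle\!\rangle$ is a traceless symmetric matrix and $\langle\!\langle \hat B q\rangle\!\rangle$ is a vector, I will test them against an arbitrary traceless symmetric $Z$ and an arbitrary $z\in\mathbb{R}^3$, and set
\[
\Psi(v)=\hat A(v):Z+\hat B(v)\cdot z .
\]

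\textbf{Step 1 (symmetrization).} The measure $MM_1 b\,\d v\d v_1\d\sigma$ is invariant under $(v,v_1)\mapsto(v_1,v)$ and under the pre/post-collisional change of variables. Using the symmetries of $q$, I get
\[
\langle\!\langle \Psi q\rangle\!\rangle=\tfrac14\langle\!\langle (\Psi+\Psi_1-\Psi'-\Psi_1')\,q\rangle\!\rangle .
\]
Cauchy--Schwarz then gives
\[
\big(\langle\!\langle \Psi q\rangle\!\rangle\big)^2\le \tfrac1{16}\langle\!\langle q^2\rangle\!\rangle\,\langle\!\langle (\Psi+\Psi_1-\Psi'-\Psi_1')^2\rangle\!\rangle .
\]

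\textbf{Step 2 (Dirichlet form).} The standard weak formulation of $\mathcal L$ gives
\[
\langle\!\langle (\Psi+\Psi_1-\Psi'-\Psi_1')^2\rangle\!\rangle = c_0\langle \Psi,\mathcal L\Psi\rangle ,
\]
where $c_0$ is a fixed normalization constant (I expect $c_0=4$ with the $\tfrac12$ in $C$). Since $\mathcal L\Psi=A:Z+B\cdot z$, this reduces to
\[
\langle \Psi,\mathcal L\Psi\rangle=\langle \hat A:Z,A:Z\rangle+\langle \hat B\cdot z,B\cdot z\rangle .
\]
The cross terms vanish by parity: $\hat A$ and $A$ are even in $v$, while $\hat B$ and $B$ are odd.

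\textbf{Step 3 (isotropy).} Rotational invariance of $\mathcal L$ gives
\[
\langle \hat A_{ij},A_{kl}\rangle=\mu\big(\delta_{ik}\delta_{jl}+\delta_{il}\delta_{jk}-\tfrac23\delta_{ij}\delta_{kl}\big),\qquad
\langle \hat B_i,B_j\rangle=\tfrac52\kappa\,\delta_{ij}.
\]
The constants are fixed by the trace identities $\mu=\frac1{10}\langle \hat A:A\rangle$ and $\kappa=\frac2{15}\langle\hat B\cdot B\rangle$ from \eqref{local_viscosity_and_heat_conductivity} at $(\rho,\theta)=(1,1)$. Hence
\[
\langle\Psi,\mathcal L\Psi\rangle=2\mu|Z|^2+\tfrac52\kappa|z|^2 .
\]
Writing $X=\langle\!\langle \hat A q\rangle\!\rangle$ and $Y=\langle\!\langle\hat B q\rangle\!\rangle$, Steps 1--3 combine to
\[
(X:Z+Y\cdot z)^2\le \tfrac{c_0}{16}\langle\!\langle q^2\rangle\!\rangle\big(2\mu|Z|^2+\tfrac52\kappa|z|^2\big)
\]
for all admissible $Z,z$.

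\textbf{Step 4 (optimization).} I take $Z=X/(2\mu)$ and $z=2Y/(5\kappa)$; this $Z$ is admissible because $X$ is itself traceless symmetric. Both sides then become multiples of
\[
S=\tfrac1{2\mu}|X|^2+\tfrac2{5\kappa}|Y|^2 ,
\]
and the inequality reads $S^2\le \tfrac{c_0}{16}\langle\!\langle q^2\rangle\!\rangle\,S$, i.e. $S\le\tfrac{c_0}{16}\langle\!\langle q^2\rangle\!\rangle$. With $c_0=4$ this is exactly the claimed bound with constant $\frac14$. Finiteness of $\langle\!\langle q^2\rangle\!\rangle$ is needed only so that $X$ and $Y$ are well defined, using $\hat A,\hat B\in L^2$ in the appropriate weighted sense.

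\textbf{Main obstacle.} The main difficulty is bookkeeping of constants rather than any conceptual step. The normalization $c_0$ in Step 2 depends on the $\frac12$ in the definition of $C$ and on whether $\langle\!\langle\cdot\rangle\!\rangle$ carries $\d\sigma$ normalized. The isotropic constants in Step 3 must match the paper's conventions for $\mu$ and $\kappa$. I would verify $c_0$ first, by expanding the square and using the four-fold symmetry of the collision measure, and then check that the final constant comes out to $\frac14$.
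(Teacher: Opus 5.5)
The paper gives no proof of its own and only cites Bardos--Golse--Levermore. Your argument (symmetrize using the collision symmetries of $q$, apply Cauchy--Schwarz against $\Psi+\Psi_1-\Psi'-\Psi_1'$, identify $\langle\!\langle(\Psi+\Psi_1-\Psi'-\Psi_1')^2\rangle\!\rangle=4\langle\Psi,\mathcal{L}\Psi\rangle=4\bigl(2\mu|Z|^2+\tfrac52\kappa|z|^2\bigr)$, then optimize over $(Z,z)$) is the standard proof from that source, and your constants are correct, including $c_0=4$ from the $\tfrac12$ in $C$.

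Your assumption that $q$ is even under $v\leftrightarrow v_1$ and odd under the pre/post-collisional exchange is essential, not cosmetic. For $q=\hat A(v):Z$ with $Z\neq 0$, the inequality would force $\langle\!\langle q^2\rangle\!\rangle\le\tfrac14\langle q,\mathcal{L}q\rangle=\tfrac1{16}\langle\!\langle(q+q_1-q'-q_1')^2\rangle\!\rangle$. This contradicts the strict form of $(a+b-c-d)^2\le 4(a^2+b^2+c^2+d^2)$. So the paper's ``any $q$'' must be read exactly as you read it, which is also what its application to $q_\varepsilon$ requires.
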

The term \eqref{dissipation_control} can therefore be rewritten as
\begin{equation} \label{dissipation_control_mid_result}
    \begin{aligned}
        \int_0^t &\int_{\mathbb{T}^3_x}\frac{1}{\varepsilon^4}D(f_\varepsilon)(s,x) - \frac{1}{2} \mu \sigma(u^b): \sigma(u^b) - \frac{5}{2}\kappa (\nabla_x \theta^b)^2\  dxds \\
        &=\int_0^t \int_{\mathbb{T}^3_x}\frac{1}{\varepsilon^4}D(f_\varepsilon)(s,x) - \frac{1}{4}\langle\!\langle {q}_{\varepsilon}^2 \rangle \!\rangle \  dx ds\\
        &+\int_0^t \int_{\mathbb{T}^3_x} \frac{1}{4}\langle\!\langle {q}_{\varepsilon}^2 \rangle \!\rangle - \frac{1}{2}\frac{1}{\mu}\langle\!\langle  \hat{A} {q}_{\varepsilon} \rangle\!\rangle :\langle\!\langle  \hat{A} {q}_{\varepsilon} \rangle\!\rangle - \frac{2}{5}\frac{1}{\kappa}\langle\!\langle  \hat{B} {q}_{\varepsilon} \rangle\!\rangle \cdot \langle\!\langle  \hat{B} {q}_{\varepsilon} \rangle\!\rangle \ dxds\\
        &+\int_0^t \int_{\mathbb{T}^3_x} \frac{1}{2}\frac{1}{\mu} \left(\langle\!\langle  \hat{A} {q}_{\varepsilon} \rangle\!\rangle - \mu \sigma(u^b)  \right) :\left(\langle\!\langle  \hat{A} {q}_{\varepsilon} \rangle\!\rangle -\mu \sigma(u^b)  \right)
        +\left(\langle\!\langle  \hat{A} {q}_{\varepsilon} \rangle\!\rangle - \mu \sigma(u^b)  \right) : \sigma(u^b) \ dxds \\
        &+\int_0^t \int_{\mathbb{T}^3_x} \frac{2}{5}\frac{1}{\kappa} \left( \langle\!\langle  \hat{B} {q}_{\varepsilon} \rangle\!\rangle - \frac{5}{2}\kappa \sigma(u^b)\right)\cdot\left( \langle\!\langle  \hat{B} {q}_{\varepsilon} \rangle\!\rangle - \frac{5}{2}\kappa \sigma(u^b)\right)
        + 2 \left( \langle\!\langle  \hat{B} {q}_{\varepsilon} \rangle\!\rangle - \frac{5}{2}\kappa \sigma(u^b)\right) \cdot \nabla_x \theta^b \ dxds.
    \end{aligned}
\end{equation}
Aside from all the positive terms, we consider
\begin{equation}
    \begin{aligned}
        \int_0^t \int_{\mathbb{T}^3_x} \left(\langle\!\langle  \hat{A} q_{\varepsilon} \rangle\!\rangle - \mu \sigma(u^b)  \right) : \sigma(u^b) \ dxds,
    \end{aligned}
\end{equation}
and
\begin{equation}
    \begin{aligned}
        \int_0^t \int_{\mathbb{T}^3_x} \left( \langle\!\langle  \hat{B} q_{\varepsilon} \rangle\!\rangle - \frac{5}{2}\kappa \sigma(u^b)\right) \cdot \nabla_x \theta^b \ dxds.
    \end{aligned}
\end{equation}
\begin{lemma}\label{q_asymptotics}
    \begin{equation} \label{A_q}
        \begin{aligned}
            \int_0^t& \int_{\mathbb{T}^3_x} \left(\langle\!\langle  \hat{A} q_{\varepsilon} \rangle\!\rangle - \mu \sigma(u^b)  \right) : \sigma(u^b) \ dxds \\
            &= \int_0^t \int_{\mathbb{T}^3_x} \left( \langle v \mdot \nabla_x \Loth g_{\varepsilon} , \hat{A}\rangle + \langle \varepsilon \partial_t g_{\varepsilon} , \hat{A}\rangle\right) :\sigma(u^b) \ dxds ,\\
            \end{aligned}\\
    \end{equation}
    \begin{equation}\label{B_q}
        \begin{aligned}
            \int_0^t & \int_{\mathbb{T}^3_x} \left( \langle\!\langle  \hat{B} q_{\varepsilon} \rangle\!\rangle - \frac{5}{2}\kappa \sigma(u^b)\right) \cdot \nabla_x \theta^b \ dxds \\
            &=\int_0^t \int_{\mathbb{T}^3_x}  \left(\langle v \mdot \nabla_x \Loth g_{\varepsilon} , \hat{B}\rangle + \langle \varepsilon \partial_t g_{\varepsilon} , \hat{B}\rangle\right) \cdot \nabla_x \theta^b \ dxds.
            \end{aligned}
    \end{equation}
\end{lemma}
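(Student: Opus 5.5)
The plan is to express $\langle\!\langle \hat{A} q_{\varepsilon}\rangle\!\rangle$ and $\langle\!\langle \hat{B} q_{\varepsilon}\rangle\!\rangle$ through the collision operator, and then replace the collision term by the kinetic equation \eqref{perturbed_Boltzmann}. Here $G_{\varepsilon}=1+\varepsilon g_{\varepsilon}$, so that $f_{\varepsilon}=MG_{\varepsilon}$.

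\textbf{Step 1: the collision identity.} By the standard pre/post-collisional symmetrization (using $MM_1=M'M_1'$ and the change of variables $(v,v_1)\mapsto(v',v_1')$), for any test function $\phi$,
\begin{equation}
\langle\!\langle \phi\, q_{\varepsilon}\rangle\!\rangle=\frac{1}{\varepsilon^2}\int_{\mathbb{R}^3_v}\phi\, C(MG_{\varepsilon},MG_{\varepsilon})\,\d v .
\end{equation}
With the normalization of $\langle\!\langle\cdot\rangle\!\rangle$ there is a numerical factor of order one to track here; I would fix it against the linear case. Expanding $G_{\varepsilon}=1+\varepsilon g_{\varepsilon}$ gives
\begin{equation}
\langle\!\langle \phi\, q_{\varepsilon}\rangle\!\rangle=\frac{1}{\varepsilon}\langle \phi,-\mathcal{L}g_{\varepsilon}\rangle+\langle \phi, Q(g_{\varepsilon},g_{\varepsilon})\rangle .
\end{equation}

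\textbf{Step 2: use the kinetic equation.} From \eqref{perturbed_Boltzmann},
\begin{equation}
-\tfrac{1}{\varepsilon}\mathcal{L}g_{\varepsilon}+Q(g_{\varepsilon},g_{\varepsilon})=\varepsilon\partial_t g_{\varepsilon}+v\mdot\nabla_x g_{\varepsilon}.
\end{equation}
Hence, with $\phi=\hat{A}$, we get $\langle\!\langle \hat{A} q_{\varepsilon}\rangle\!\rangle=\langle \hat{A}, v\mdot\nabla_x g_{\varepsilon}\rangle+\langle\hat{A},\varepsilon\partial_t g_{\varepsilon}\rangle$, and the same holds for $\hat{B}$.

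\textbf{Step 3: split into fluid and kinetic parts.} Write $g_{\varepsilon}=\Lpre g_{\varepsilon}+\Loth g_{\varepsilon}$ and apply \eqref{v_dot_nabla_g} to $\Lpre g_{\varepsilon}$, whose moments are $(\rho^b,u^b,\theta^b)$ up to normalization.

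In the resulting expression, the terms along $1$, $v$ and $\frac{|v|^2}{2}-\frac32$ lie in $\mathcal{N}$. Since $\hat{A},\hat{B}\in\mathcal{N}^{\perp}$, these terms vanish against $\hat{A}$ and $\hat{B}$. What remains is $\langle\hat{A},A(v)\rangle:\nabla_x u^b+\langle\hat{A},B(v)\rangle\cdot\nabla_x\theta^b$.

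Next apply isotropy: $\langle\hat{A}_{ij},B_k\rangle=0$ by parity, and $\langle \hat{A}\otimes A\rangle$ is the isotropic traceless tensor with coefficient $\mu$. This gives $\langle\hat{A},A:\nabla_x u^b\rangle=\mu\sigma(u^b)$; the normalization follows from \eqref{local_viscosity_and_heat_conductivity} at $(1,1)$ and is the same computation that produced Lemma \ref{AvBv_asymptotics}. Similarly $\langle\hat{B},B\cdot\nabla_x\theta^b\rangle=\frac52\kappa\nabla_x\theta^b$.

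Subtracting these values and pairing with $\sigma(u^b)$, respectively $\nabla_x\theta^b$, integrated over $[0,t]\times\mathbb{T}^3$, yields \eqref{A_q} and \eqref{B_q}. The $\Loth$ parts and the time-derivative terms are left exactly as written. The $\varepsilon\partial_t g_{\varepsilon}$ term contains both projections; that is consistent with the statement, since $\langle \hat{A},\partial_t\Lpre g_{\varepsilon}\rangle=0$ anyway.

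\textbf{Main obstacle.} The hard part is Step 1 together with the normalization constants. I need to verify the symmetrization identity with the exact factor relating $\langle\!\langle\cdot\rangle\!\rangle$, the kernel $b$ and $C$, so that the coefficient in front of $\mu\sigma(u^b)$ comes out to exactly $\mu$, not a multiple of it. The second point, in the $\hat{B}$ case, is to read the $\frac52\kappa\sigma(u^b)$ in the statement as $\frac52\kappa\nabla_x\theta^b$, which is what the computation produces. I would check both constants against the linearized Chapman–Enskog relation $\frac{1}{\varepsilon}\Loth g_{\varepsilon}\approx -\hat{A}:\nabla_x u^b-\hat{B}\cdot\nabla_x\theta^b$.
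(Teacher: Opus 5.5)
Your proposal is correct and follows the paper's route. Like the paper, you use \eqref{perturbed_Boltzmann} to get $\langle\!\langle \hat{A} q_{\varepsilon}\rangle\!\rangle=\langle \varepsilon\partial_t g_{\varepsilon}+v\mdot\nabla_x g_{\varepsilon},\hat{A}\rangle$, then strip off the $\Lpre g_{\varepsilon}$ contribution with \eqref{v_dot_nabla_g} and Lemma \ref{Tensor_Product}, and do the same for $\hat{B}$. Your normalization worries resolve cleanly: $MM_1=M'M_1'$ gives $\langle\!\langle \phi q_{\varepsilon}\rangle\!\rangle=\varepsilon^{-2}\int\phi\,C(f_{\varepsilon},f_{\varepsilon})\,dv$ directly with factor exactly $1$ (taking $b$ to be the cross-section $B$; no change of variables is needed), and $\Lpre g_{\varepsilon}=\rho^b+v\mdot u^b+(\frac{|v|^2}{2}-\frac32)\theta^b$ exactly. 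You are also right that $\frac52\kappa\sigma(u^b)$ in \eqref{B_q} must read $\frac52\kappa\nabla_x\theta^b$, which is what the paper's own proof derives.
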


Combing now \eqref{dissipation_control_mid_result}, \eqref{A_q} and \eqref{B_q} yields that
\begin{equation} \label{dissipation_control_final_result}
    \begin{aligned}
        \int_0^t &\int_{\mathbb{T}^3_x}\frac{1}{\varepsilon^4}D(f_\varepsilon)(s,x) - \frac{1}{2} \mu \sigma(u^b): \sigma(u^b) - \frac{5}{2}\kappa (\nabla_x \theta^b)^2 \  dxds -R_{11} - R_{12} - 2R_{13} \geqslant 0,
    \end{aligned}
\end{equation}
where
\begin{equation}
    \begin{aligned}
        R_{12}= \int_0^t \int_{\mathbb{T}^3_x} \left( \langle v \mdot \nabla_x \Loth g_{\varepsilon} , \hat{A}\rangle + \langle \varepsilon \partial_t g_{\varepsilon} , \hat{A}\rangle\right) :\sigma(u^b) \ dxds ,\\
        R_{13}=\int_0^t \int_{\mathbb{T}^3_x}  \left(\langle v \mdot \nabla_x \Loth g_{\varepsilon} , \hat{B}\rangle + \langle \varepsilon \partial_t g_{\varepsilon} , \hat{B}\rangle\right) \cdot \nabla_x \theta^b \ dxds.
    \end{aligned}
\end{equation}

\subsection{Conclusion} \label{SS_Conclusion}
We now complete the proof of Theorem \ref{main}.
Note that
\begin{equation}\label{quadratic_term}
    \int_0^t \int_{\mathbb{T}^3_x} \frac{1}{2}\mu \sigma(\tilde{u}-u^b):\sigma(\tilde{u}-u^b) + \frac{5}{2}  \kappa(\nabla_x \tilde{\theta} -\nabla_x \theta^b)^2 \ dxds \geqslant 0.
\end{equation}
Combining with \eqref{mid_result} and \eqref{dissipation_control_final_result} we have
\begin{equation}
    \begin{aligned}
        \frac{1}{{\varepsilon}^2} &H(f_{\varepsilon}| M_{\varepsilon})(t)  \\
        &\lesssim \frac{1}{\varepsilon^2} H(f^{\mathrm{in}}_{\varepsilon}| M^{\mathrm{in}}_{\varepsilon}) + \int^t_0 \|(\nabla_x \tilde{u},\nabla_x \tilde{\theta})\|_{L^{\infty}(dx)} \frac{1}{\varepsilon^2} H(f_{\varepsilon}|M_{\varepsilon}) ds \\
        & +\tilde{R}+ R_5+R_6+R_7 -R_{11}-R_{12} - 2R_{13}.
    \end{aligned}
\end{equation}
All the remainder terms can be shown to tend to zero as $\varepsilon$ tends to zero by using that $(\rho^b_{\varepsilon}, u^b_{\varepsilon}, \theta^b_{\varepsilon})$, $(\tilde{\rho}_{\varepsilon},\tilde{u}_{\varepsilon},\tilde{\theta}_{\varepsilon})$, $g_{\varepsilon}$ and $\frac{1}{\varepsilon} \Loth g_{\varepsilon}$
are separately uniformly bounded in some proper functional spaces, and Grönwall's inequality then implies that
\begin{equation}
    \begin{aligned}
       \frac{1}{{\varepsilon}^2} H(f_{\varepsilon}| M_{\varepsilon})(t)  \leqslant
       \left( \frac{1}{\varepsilon^2} H(f^{\mathrm{in}}_{\varepsilon}| M^{\mathrm{in}}_{\varepsilon})+  \mathcal{O}(\varepsilon)\right) \exp \left( C \int_0^{\infty}  \| (\nabla_x  \tilde{u}, \nabla_x \tilde{\theta}) \|_{L^{\infty}(dx)} dt \right).
    \end{aligned}
\end{equation}
Hence,
\begin{equation}\label{entropy_control}
    \begin{aligned}
       \frac{1}{{\varepsilon}^2} H(f_{\varepsilon}| M_{\varepsilon})(t)  \lesssim
       \frac{1}{\varepsilon^2} H(f^{\mathrm{in}}_{\varepsilon}| M^{\mathrm{in}}_{\varepsilon})+  \mathcal{O}(\varepsilon).
    \end{aligned}
\end{equation}
This estimate together with \eqref{mid_result}, \eqref{quadratic_term} and the fact $H(f|g) \geqslant 0$ leads to
\begin{equation}\label{entropy_dissipation_bound_temp}
    \int_0^t \int_{\mathbb{T}^3_x}\frac{1}{\varepsilon^4}D(f_\varepsilon)(s,x) - \frac{1}{2} \mu \sigma(u^b): \sigma(u^b) - \frac{5}{2}\kappa (\nabla_x \theta^b)^2\  dxds \lesssim \frac{1}{\varepsilon^2} H(f_{\varepsilon}^{\mathrm{in}}|M_{\varepsilon}^{\mathrm{in}}) + \mathcal{O}(\varepsilon).
  \end{equation}
 Using \eqref{mid_result},   \eqref{entropy_control}, \eqref{entropy_dissipation_bound_temp} and \eqref{dissipation_control_final_result},we then have
 \begin{equation}
    \int_0^t \int_{\mathbb{T}^3_x} \frac{1}{2}\mu \sigma(\tilde{u}-u^b):\sigma(\tilde{u}-u^b) + \frac{5}{2}  \kappa(\nabla_x \tilde{\theta} -\nabla_x \theta^b)^2 \ dxds \lesssim \frac{1}{\varepsilon^2} H(f_{\varepsilon}^{\mathrm{in}}|M_{\varepsilon}^{\mathrm{in}}) + \mathcal{O}(\varepsilon),
\end{equation}
which completes the proof of Theorem \ref{main}.

\section{Appendix}
  We present the proof of the lemmas we used in this paper.
    \begin{proof}[\textbf{Proof of Lemma \ref{observation_lemma}}]
        Using the explicit form of $\mathcal{M}(\rho,u,\theta)$ and the relation $V := \frac{v - u}{\sqrt{\theta}}$, we have
       \begin{equation}\label{observation_temp1}
        \begin{aligned}
            v \cdot &\nabla_x \log \mathcal{M}(\rho,u,\theta) = v \mdot \nabla_x  \left(\log \rho   -\frac{3}{2} \log \theta - \frac{|u-v|^2}{2\theta} \right)\\
            &= \frac{v \mdot \nabla_x \rho }{\rho} - \frac{3}{2} \frac{v \mdot \nabla_x \theta}{\theta}  + \frac{|v-u|^2}{2\theta^2} v \mdot \nabla_x \theta - \frac{1}{2\theta} (v \mdot \nabla_x u) \cdot 2(u-v)\\
            &= \frac{(\sqrt{\theta}V + u)\cdot \nabla_x \rho}{\rho} - \frac{3}{2} \frac{(\sqrt{\theta}V + u)\cdot \nabla_x \theta}{\theta} + \frac{|V|^2}{2}\frac{(\sqrt{\theta}V + u)\cdot \nabla_x \theta}{\theta}  + \frac{(\sqrt{\theta}V + u )\cdot \nabla_x u}{\sqrt{\theta}} \cdot V\\
            &= \frac{1}{\rho} \left( u \cdot \nabla_x \rho \right) + (\frac{\theta\nabla_x\rho}{\rho}-\frac{3}{2} \nabla_x \theta  + u \mdot \nabla_x u) \mdot \frac{V}{\sqrt{\theta}}  + (u \mdot \nabla_x \theta) \frac{1}{\theta}(\frac{|V|^2}{2}-\frac{3}{2}) \\
            &+ \frac{|V|^2}{2}V \cdot \frac{\nabla_x \theta}{\sqrt{\theta}} + V \otimes V : \nabla_x u.
        \end{aligned}
       \end{equation}
       Note that
    \begin{equation}\label{observation_temp2}
        \begin{aligned}
            \frac{|V|^2}{2}V  \mdot \frac{\nabla_x \theta}{\sqrt{\theta}}  &= (B(V)  + \frac{5}{2}V ) \mdot \frac{\nabla_x \theta}{\sqrt{\theta}} = B(V) \mdot \frac{\nabla_x \theta}{\sqrt{\theta}} + \frac{5}{2}\nabla_x \theta \mdot \frac{V}{\sqrt{\theta}},\\
            V\otimes V : \nabla_x u &=( A(V) + \frac{2}{3} (\frac{|V|^2}{2}-\frac{3}{2})I + I): \nabla_x u \\
            &=  A(V): \nabla_x u +\frac{2}{3} \theta \nabla_x \mdot u (\frac{1}{\theta} (\frac{|V|^2}{2}-\frac{3}{2})) + \nabla_x \mdot u.
        \end{aligned}
    \end{equation}
    Combing \eqref{observation_temp1} and \eqref{observation_temp2} leads to
    \begin{equation}
        \begin{aligned}
            v \cdot &\nabla_x \log \mathcal{M}(\rho,u,\theta) = (u \mdot \nabla_x \rho + \rho \nabla_x \mdot u)\frac{1}{\rho} + ( u \mdot \nabla_x u  + \frac{\theta}{\rho} \nabla_x \rho + \nabla_x \theta)\frac{V}{\sqrt{\theta}}\\
            &+ (u \mdot \nabla_x \theta
            + \frac{2}{3} \theta  \nabla_x \mdot u) \frac{1}{\theta}(\frac{|V|^2}{2}-\frac{3}{2})+A(V):\nabla_x u + B(V)\cdot \frac{\nabla_x \theta}{\sqrt{\theta}},
        \end{aligned}
    \end{equation}
    which completes the proof.

    \end{proof}

  \begin{proof}[\textbf{Proof of Lemma \ref{AvBv_asymptotics}}]
   To prove this lemma, we need to use the following two results which we refer to \cites{BGL1,BGL2} for their proof.
   \begin{lemma}\cite{BGL1} \label{Qgg_Lg}
    If $g \in Ker \mathcal{L}$, then we have
    \begin{equation}
        Q(g,g) = \mathcal{L} g^2.
    \end{equation}
\end{lemma}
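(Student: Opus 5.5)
The plan is a direct computation of both sides as integrals against $b\,M_1\,d v_1\,d\sigma$, followed by the algebraic identity for collision invariants. First I will write out the two operators explicitly from their definitions. Using the symmetrized form of $C$ and the Maxwellian identity $M'M_1' = MM_1$ (energy and momentum conservation in a collision), I expect
\begin{equation}
Q(g,g)=\frac{1}{M}C(Mg,Mg)=\iint b\,M_1\,\bigl(g'g_1'-g\,g_1\bigr)\,d v_1\,d\sigma ,
\end{equation}
and, for a general $h$,
\begin{equation}
\mathcal{L}h=-\frac{1}{M}\bigl(C(M,Mh)+C(Mh,M)\bigr)=-\iint b\,M_1\,\bigl(h'+h_1'-h-h_1\bigr)\,d v_1\,d\sigma .
\end{equation}
In both formulas I will cancel $M'M_1'$ against $MM_1$ after dividing by $M$.

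Second, I will use $g\in\operatorname{Ker}\mathcal{L}=\mathcal{N}=\mathrm{Span}\{1,v,|v|^2\}$. Then $g$ is a collision invariant, so $g'+g_1'=g+g_1$ pointwise on the collision manifold. Squaring both sides gives
\begin{equation}
g'^2+g_1'^2-g^2-g_1^2=-2\bigl(g'g_1'-g\,g_1\bigr).
\end{equation}
Third, I will substitute $h=g^2$ into the formula for $\mathcal{L}h$. By the identity just obtained, the integrand of $\mathcal{L}g^2$ is a constant multiple of the integrand of $Q(g,g)$, and this yields the claimed relation $Q(g,g)=\mathcal{L}g^2$. Integrability is not an issue: $g$ is a polynomial of degree at most two in $v$, and $M_1$ provides Gaussian decay.

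\textbf{Main obstacle.} The only delicate point is the numerical constant, and my check shows that the stated identity as worded does not survive it. Carrying the factor $\tfrac12$ in the definition of $C$ through both expansions gives $\mathcal{L}g^2=2\iint bM_1(g'g_1'-gg_1)$, whereas $Q(g,g)=\iint bM_1(g'g_1'-gg_1)$. So the computation, with the normalizations exactly as stated in the paper, produces
\begin{equation}
Q(g,g)=\tfrac12\,\mathcal{L}g^2 ,
\end{equation}
and not $Q(g,g)=\mathcal{L}g^2$.

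To obtain equality without the factor $\tfrac12$, one of the two operators would have to be normalized differently from the definitions above. For example, $\mathcal{L}$ could be defined with a factor $\tfrac12$ in front of $C(M,Mh)+C(Mh,M)$, or $Q$ could be defined as the unsymmetrized bilinear form. Either choice must be made explicit; until it is, the argument above establishes the identity only up to this factor of $2$.

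The factor does not affect the structural use of the lemma in the proof of Lemma~\ref{AvBv_asymptotics}, namely that $Q(\mathbb{P}_{\mathcal{L}}g,\mathbb{P}_{\mathcal{L}}g)$ lies in the range of $\mathcal{L}$. It does change any explicit coefficient obtained there by pairing with $\hat A$ or $\hat B$. The remaining two steps are routine bookkeeping.
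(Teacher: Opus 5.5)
Your computation is correct and is the standard argument behind the BGL result that the paper cites; the paper gives no proof of its own. With the paper's normalizations, $\mathcal{L}(g^2)=-\iint bM_1\bigl(g'^2+g_1'^2-g^2-g_1^2\bigr)\,dv_1\,d\sigma=2Q(g,g)$, so the printed identity is off by a factor of $2$ and the correct statement is $Q(g,g)=\tfrac12\mathcal{L}(g^2)$. Your third step should state this instead of ``the claimed relation.''

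This corrected form is exactly what the paper uses when it applies the lemma in the proof of Lemma~\ref{AvBv_asymptotics}. There it writes $\langle\hat A,\tfrac12\mathcal{L}((\Lpre g_\varepsilon)^2)\rangle$ and obtains $u^b\otimes u^b-\tfrac13|u^b|^2I$ and $\tfrac52u^b\theta^b$. So, contrary to your last paragraph, no downstream coefficient changes; only the display in the statement is a typo.
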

\begin{lemma}\cite{BGL2} \label{Tensor_Product}
    \begin{equation}
        \begin{aligned}
        \langle \hat{A} \otimes A \rangle &= \mu (\delta_{ik} \delta_{jl} + \delta_{il} \delta_{jk} -\frac{2}{3} \delta_{ij}\delta_{kl}), \\
        \langle \hat{B} \otimes B \rangle &= \frac{5}{2} \kappa \delta_{ij}.
        \end{aligned}
    \end{equation}
\end{lemma}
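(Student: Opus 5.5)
The plan is to use rotational invariance to reduce both tensors to a single scalar each, and then identify those scalars with $\mu$ and $\kappa$ through the definitions \eqref{local_viscosity_and_heat_conductivity}, evaluated at $(\rho,u,\theta)=(1,0,1)$. At that state $V=v$ and $\mathcal{L}_{\mathcal{M}}=\mathcal{L}$.

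\emph{Step 1: structure of $\hat A$ and $\hat B$.} For every rotation $R\in O(3)$, $\mathcal{L}$ commutes with $g\mapsto g(R\,\cdot)$. This holds because $M$ is radial and $B(v-v_1,\sigma)$ depends only on $|v-v_1|$ and $\frac{v-v_1}{|v-v_1|}\cdot\sigma$. The sources satisfy $A(Rv)=RA(v)R^T$ and $B(Rv)=RB(v)$, and $\hat A,\hat B$ are the unique solutions in $\mathcal{N}^\perp$. Hence uniqueness gives $\hat A(Rv)=R\hat A(v)R^T$ and $\hat B(Rv)=R\hat B(v)$. The classical consequence, which I will take from \cite{BGL1} or rederive, is
\[
\hat A(v)=a(|v|)A(v),\qquad \hat B(v)=b(|v|)B(v)
\]
for scalar functions $a$ and $b$. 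For the next step only the equivariance itself is really needed.

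\emph{Step 2: isotropic tensors.} Consider $T_{ijkl}=\langle \hat A_{ij}A_{kl}\rangle$. It is invariant under simultaneous rotation of all four indices, since $M$ is radial. So $T$ is a linear combination of $\delta_{ij}\delta_{kl}$, $\delta_{ik}\delta_{jl}$ and $\delta_{il}\delta_{jk}$. The tensor $T$ is symmetric in $(i,j)$ and in $(k,l)$, which forces the last two coefficients to be equal. It is also traceless in $(k,l)$, because $A$ is traceless. Together these give
\[
T=c\bigl(\delta_{ik}\delta_{jl}+\delta_{il}\delta_{jk}-\tfrac23\delta_{ij}\delta_{kl}\bigr).
\]
In the same way, $\langle \hat B_iB_j\rangle$ is an isotropic $2$-tensor, so it equals $c'\delta_{ij}$.

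\emph{Step 3: fixing the constants.} Contract $T$ with $\delta_{ik}\delta_{jl}$. On the left this gives $\langle \hat A:A\rangle=10\mu$ by the definition of $\mu=\mu(1,1)$. On the right it gives $c(9+3-\tfrac23\cdot 3)=10c$, so $c=\mu$. Taking the trace of the $B$ identity gives $3c'=\langle \hat B\cdot B\rangle=\tfrac{15}{2}\kappa$, so $c'=\tfrac52\kappa$.

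The only genuinely non-computational point, and the main obstacle, is Step 1. It requires checking that rotation invariance of the cross-section, together with the uniqueness of the solution in $\mathcal{N}^\perp$, yields the equivariance of $\hat A$ and $\hat B$. Everything after that is linear algebra on isotropic tensors.
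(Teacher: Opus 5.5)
Your proof is correct. The paper gives no argument of its own for this lemma and simply defers to \cite{BGL2}, where it is obtained essentially along your lines: rotational equivariance of $\hat A,\hat B$ (recorded there as $\hat A=a(|v|)A$, $\hat B=b(|v|)B$), reduction to isotropic tensors, and normalization through $\mu=\frac{1}{10}\langle \hat A:A\rangle$ and $\kappa=\frac{2}{15}\langle \hat B\cdot B\rangle$. Your Step 1 is not really an obstacle: $\mathcal{L}$ commutes with rotations and $\mathcal{N}^\perp$ is rotation invariant, so $R^{T}\hat A(R\,\cdot)R$ and $R^{T}\hat B(R\,\cdot)$ solve the same equations in $\mathcal{N}^\perp$ and coincide with $\hat A,\hat B$ by uniqueness; your constants ($10c=10\mu$, $3c'=\frac{15}{2}\kappa$) also check out.
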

    Using \eqref{perturbed_Boltzmann} we have
\begin{equation}
    \langle A(v), g_{\varepsilon} \rangle = \langle \hat{A}(v),\mathcal{L} g_{\varepsilon} \rangle = \langle \hat{A}(v), -\varepsilon^2 \partial_t g_{\varepsilon} - \varepsilon v\mdot \nabla_x g_{\varepsilon} + \varepsilon Q(g_{\varepsilon},g_{\varepsilon}) \rangle.
\end{equation}
By Lemma \ref{Tensor_Product} and \eqref{v_dot_nabla_g}, we readily have
\begin{equation}
    \begin{aligned}
        \langle \hat{A}(v),v\mdot \nabla_x g_{\varepsilon} \rangle &= \langle \hat{A}(v),v\mdot \nabla_x \Lpre g_{\varepsilon} \rangle + \langle \hat{A}(v),v\mdot \nabla_x \Loth g_{\varepsilon} \rangle\\
        &= \langle \hat{A}(v)\otimes A(v) \rangle : \nabla_x u^b + \langle \hat{A}(v),v\mdot \nabla_x \Loth g_{\varepsilon} \rangle\\
        &=\mu \sigma(u^b) + \langle \hat{A}(v),v\mdot \nabla_x \Loth g_{\varepsilon} \rangle.
    \end{aligned}
\end{equation}
Using Lemma \ref{Qgg_Lg} results in
\begin{equation}
    \begin{aligned}
        \langle \hat{A}(v), Q(g_{\varepsilon},g_{\varepsilon})  \rangle &=\langle \hat{A}(v), Q(\Lpre g_{\varepsilon},\Lpre g_{\varepsilon})  \rangle + 2\langle \hat{A}(v), Q(\Loth g_{\varepsilon},\Lpre g_{\varepsilon})  \rangle + \langle \hat{A}(v), Q(\Loth g_{\varepsilon},\Loth g_{\varepsilon})  \rangle\\
        &=\langle \hat{A}(v),\frac{1}{2} \mathcal{L}(\Lpre g_{\varepsilon}^2)  \rangle +2\langle \hat{A}(v), Q(\Loth g_{\varepsilon},\Lpre g_{\varepsilon})  \rangle + \langle \hat{A}(v), Q(\Loth g_{\varepsilon},\Loth g_{\varepsilon})  \rangle\\
        &=\frac{1}{2}\langle {A}(v) \otimes A(v) \rangle : u^b\otimes u^b+2\langle \hat{A}(v), Q(\Loth g_{\varepsilon},\Lpre g_{\varepsilon})  \rangle + \langle \hat{A}(v), Q(\Loth g_{\varepsilon},\Loth g_{\varepsilon})  \rangle\\
        &= u^b \otimes u^b - \frac{(u^b)^2}{3}I +2\langle \hat{A}(v), Q(\Loth g_{\varepsilon},\Lpre g_{\varepsilon})  \rangle + \langle \hat{A}(v), Q(\Loth g_{\varepsilon},\Loth g_{\varepsilon})  \rangle.
    \end{aligned}
\end{equation}
And similarly,
\begin{equation}
    \begin{aligned}
        \langle \hat{B}(v),v\mdot \nabla_x g_{\varepsilon} \rangle
        &=\frac{5}{2}\kappa \nabla_x \theta^b + \langle \hat{B}(v),v\mdot \nabla_x \Loth g_{\varepsilon} \rangle.
    \end{aligned}
\end{equation}
\begin{equation}
    \begin{aligned}
        \langle \hat{B}(v), Q(g_{\varepsilon},g_{\varepsilon})  \rangle
        &=\frac{5}{2}u^b \theta^b+2\langle \hat{B}(v), Q(\Loth g_{\varepsilon},\Lpre g_{\varepsilon})  \rangle + \langle \hat{B}(v), Q(\Loth g_{\varepsilon},\Loth g_{\varepsilon})  \rangle.
    \end{aligned}
\end{equation}
In summary, we have
\begin{equation}
    \begin{aligned}
    \frac{1}{\varepsilon}\langle A(v), g_{\varepsilon} \rangle &=-\mu \sigma(u^b) +\left(u^b \otimes u^b - \frac{(u^b)^2}{3}I\right) + R_A,\\
    \frac{1}{\varepsilon}\langle B(v), g_{\varepsilon} \rangle &=-\frac{5}{2}\kappa \nabla_x \theta^b +\frac{5}{2}u^b \theta^b  + R_B.
    \end{aligned}
\end{equation}
\end{proof}

\begin{proof}[\textbf{Proof of Lemma \ref{same_moments}}]
    Since $f_{\varepsilon}$ and $M_{f_{\varepsilon}}$ have the same moments, we have
    \begin{equation}
        \begin{aligned}
            \int_{\mathbb{R}^3_v} f_{\varepsilon} \log M_{f_{\varepsilon}} -  M_{f_{\varepsilon}} \log  M_{f_{\varepsilon}} dv =0,\\
            \int_{\mathbb{R}^3_v} f_{\varepsilon} \log M_{{\varepsilon}} -  M_{f_{\varepsilon}} \log  M_{{\varepsilon}} dv =0.
        \end{aligned}
    \end{equation}
    Hence,
    \begin{equation}
        \begin{aligned}
            H(f_{\varepsilon}| M_{\varepsilon})&= \int_{\mathbb{T}^3_x}  \int_{\mathbb{R}^3_v}  f_{\varepsilon} \log f_{\varepsilon} -f_{\varepsilon} \log M_{\varepsilon} -f_{\varepsilon} + M_{\varepsilon} \ dv dx\\
            &= \int_{\mathbb{T}^3_x}  \int_{\mathbb{R}^3_v} f_{\varepsilon} \log f_{\varepsilon} - f_{\varepsilon} \log M_{f_{\varepsilon}} +   f_{\varepsilon} \log M_{f_{\varepsilon}} - f_{\varepsilon}\log M_{\varepsilon} -f_{\varepsilon} + M_{\varepsilon} \ dv dx \\
            &= \int_{\mathbb{T}^3_x}  \int_{\mathbb{R}^3_v} f_{\varepsilon} \log f_{\varepsilon} - {f_{\varepsilon}} \log M_{f_{\varepsilon}}  -f_{\varepsilon } + M_{f_{\varepsilon}} \ dv dx\\
            &+ \int_{\mathbb{T}^3_x}  \int_{\mathbb{R}^3_v} M_{f_{\varepsilon}} \log M_{f_{\varepsilon}} - M_{f_{\varepsilon}}\log M_{\varepsilon} - M_{f_{\varepsilon} } + M_{\varepsilon} \ dv dx\\
            &= H(f_{\varepsilon}| M_{f_{\varepsilon}}) + H(M_{f_{\varepsilon}}| M_{\varepsilon}).
        \end{aligned}
    \end{equation}
\end{proof}
\begin{proof}[\textbf{Proof of Lemma \ref{difference_moments_control}}]
    Note that
    \begin{equation}
        M_{f_{\varepsilon}} = \mathcal{M} (\rho_{f_{\varepsilon}},u_{f_{\varepsilon}},\theta_{f_{\varepsilon}} ),
    \end{equation}
    by \eqref{g_moments}, we have
    \begin{equation}
        \begin{aligned}
            &\rho_{f_{\varepsilon}} = \int_{\mathbb{R}^3_v} f_{\varepsilon} \ dv  = 1+ \varepsilon \langle g_{\varepsilon} \rangle   = 1+ \varepsilon \rho^b,\\
            &\rho_{f_{\varepsilon}}  u_{f_{\varepsilon}} =\int_{\mathbb{R}^3_v} v f_{\varepsilon}\ dv =  \varepsilon \langle v g_{\varepsilon} \rangle = \varepsilon u^b ,\\
            &\rho_{f_{\varepsilon}} \left(\theta_{f_{\varepsilon}}  + \frac{|u_{f_{\varepsilon}}|^2}{3} -1\right) =\int_{\mathbb{R}^3_v} \frac{|v|^2-3}{3} f_{\varepsilon} \ dv  =  \varepsilon \langle \frac{|v|^2-3}{3} g_{\varepsilon} \rangle = \varepsilon \theta^b. \\
        \end{aligned}
    \end{equation}
    Simple calculation then shows that
    \begin{equation}
        \begin{aligned}
            \rho_{f_{\varepsilon}} &= 1 + \varepsilon \rho^b, \quad \quad
            u_{f_{\varepsilon}} =  \varepsilon u^b - \varepsilon^2 \frac{\rho^b u^b}{1+ \varepsilon \rho^b} =\varepsilon u^b+ \varepsilon^2 r_1,\\
            \theta_{f_{\varepsilon}} &= 1 + \varepsilon \theta^b -\varepsilon^2 \left(\frac{\theta^b \rho^b }{1 + \varepsilon \rho^b} + \frac{1}{3}(u^b)^2 -\frac{2\varepsilon(u^b)^2 \rho^b}{3(1+\varepsilon \rho^b)} +  \varepsilon^2  \frac{(\rho^b u^b)^2}{3(1+\varepsilon \rho^b)^2}  \right)
            =1 + \varepsilon \theta^b + \varepsilon^2 r_2.
        \end{aligned}
    \end{equation}
    Now by definition of relative entropy, we have
    \begin{equation}
        \begin{aligned}
            \frac{1}{\varepsilon^2 } H(M_{f_{\varepsilon}}&|M_{\varepsilon}) = \frac{1}{\varepsilon^2 } \int_{\mathbb{T}^3_x}  \int_{\mathbb{R}^3_v} M_{f_{\varepsilon}} \log M_{f_{\varepsilon}} - M_{f_{\varepsilon}} \log M_{\varepsilon}\ dv dx + \frac{1}{\varepsilon^2 } \int_{\mathbb{T}^3_x} - \rho_{f_{\varepsilon}} + 1+ \varepsilon \tilde{\rho} \ dx  \\
            &=\frac{1}{\varepsilon^2 } \int_{\mathbb{T}^3_x}  \int_{\mathbb{R}^3_v} M_{f_{\varepsilon}} \left( \log \rho_{f_{\varepsilon}} - \log(1+ \varepsilon \tilde{\rho}) -\frac{3}{2} \log\theta_{f_{\varepsilon}} + \frac{3}{2} \log(1+\varepsilon \tilde{\theta}) \right) \ dv dx \\
            &-\frac{1}{\varepsilon^2 } \int_{\mathbb{T}^3_x}  \int_{\mathbb{R}^3_v} M_{f_{\varepsilon}} \left( \frac{|u_{f_{\varepsilon}}-v|^2}{2\theta_{f_{\varepsilon}}} + \frac{|\varepsilon \tilde{u} - v|^2}{2(1+\varepsilon\theta)}\right) \ dv dx + \frac{1}{\varepsilon^2 } \int_{\mathbb{T}^3_x} - \varepsilon \rho^b + \varepsilon \tilde{\rho} \ dx\\
            &= \frac{1}{\varepsilon^2 } \int_{\mathbb{T}^3_x} \rho_{f_{\varepsilon}} \left(\log \rho_{f_{\varepsilon}} - \log(1+ \varepsilon \tilde{\rho}) -\frac{3}{2} \log\theta_{f_{\varepsilon}} + \frac{3}{2} \log(1+\varepsilon \tilde{\theta})\right)\ \ dx \\
            &+ \frac{1}{\varepsilon^2 } \int_{\mathbb{T}^3_x} \rho_{f_{\varepsilon}} \left(\frac{3}{2(1+\varepsilon \tilde{\theta})} (\theta_{f_{\varepsilon}}-1-\varepsilon \tilde{\theta})  + \frac{|u_{f_{\varepsilon}}- \varepsilon \tilde{u}|^2}{2(1+\varepsilon \tilde{\theta})}\right) - \varepsilon \rho^b + \varepsilon \tilde{\rho} \ dx.\\
        \end{aligned}
    \end{equation}
    Note that
    \begin{equation}\label{R8}
        \begin{aligned}
            \frac{1}{\varepsilon^2 }& \int_{\mathbb{T}^3_x} \rho_{f_{\varepsilon}} \left(\log \rho_{f_{\varepsilon}} - \log(1+ \varepsilon \tilde{\rho}) \right) - \varepsilon \rho^b + \varepsilon \tilde{\rho} \ dx\\
            &= \frac{1}{\varepsilon^2 } \int_{\mathbb{T}^3_x} (1 + \varepsilon \rho^b)
                    \left(\varepsilon \rho^b - \frac{1}{2}( \varepsilon \rho^b)^2
                    -  \varepsilon \tilde{\rho} - \frac{1}{2}( \varepsilon \tilde{\rho})^2 \right)  + \varepsilon \tilde{ \rho} - \varepsilon \rho^b \ dx + R_8\\
            &=   \int_{\mathbb{T}^3_x} \frac{1}{2} \left( \rho^b -\tilde{\rho}\right)^2 \ dx + R_8,
        \end{aligned}
    \end{equation}
    where
    \begin{equation}
        \begin{aligned}
            R_8 = \varepsilon \int_{\mathbb{T}^3_x} \left( C(\|\varepsilon \tilde{\rho}\|_{L^\infty(dx dt)}) (\rho^b)^3 +  C(\|\varepsilon \tilde{\rho}\|_{L^\infty(dx dt)}) (\tilde{\rho})^3\right) (1 + \varepsilon \rho^b) \ dx,
        \end{aligned}
    \end{equation}
    and $C(\|\varepsilon \tilde{\rho}\|_{L^\infty(dx dt)})$ is a constant depending only on $\|\varepsilon \tilde{\rho}\|_{L^\infty(dx dt)}$.

    And similarly,
    \begin{equation}\label{R9}
        \begin{aligned}
            \frac{1}{\varepsilon^2 }& \int_{\mathbb{T}^3_x} \rho_{f_{\varepsilon}} \left( -\frac{3}{2} \log\theta_{f_{\varepsilon}} + \frac{3}{2} \log(1+\varepsilon \tilde{\theta}) + \frac{3}{2(1+\varepsilon \tilde{\theta})} (\theta_{f_{\varepsilon}}-1-\varepsilon \tilde{\theta})  + \frac{|u_{f_{\varepsilon}}- \varepsilon \tilde{u}|^2}{2(1+\varepsilon \tilde{\theta})}\right)\ \ dx \\
            &=\frac{1}{\varepsilon^2 } \int_{\mathbb{T}^3_x} (1 + \varepsilon \rho^b) \frac{3}{2} \left( \varepsilon \tilde{\theta} - \frac{1}{2}(\varepsilon \tilde{\theta})^2 - \varepsilon \theta^b - \varepsilon^2 r_2 + \frac{1}{2} (\varepsilon \theta^b + \varepsilon^2 r_2)^2 + \frac{\varepsilon \theta^b + \varepsilon^2 r_2 - \varepsilon \tilde{\theta}}{1+ \varepsilon \tilde{ \theta}} \right)  \ dx \\
            &+  \varepsilon \int_{\mathbb{T}^3_x} \frac{3}{2} C(\|\varepsilon \tilde{\theta}\|_{L^{\infty}(dxdt)})\tilde{\theta}^3  - \frac{3}{2} C(\|\varepsilon \theta^b + \varepsilon^2 r_2\|_{L^{\infty}(dxdt)})  ( \theta^b + \varepsilon r_2)^3 \ dx \\
            &= \int_{\mathbb{T}^3_x}  \frac{3}{4} (\tilde{\theta} - \theta^b)^2 + R_9 ,
        \end{aligned}
    \end{equation}
    where
    \begin{equation}
        \begin{aligned}
            R_9& = \varepsilon \frac{3}{2}\int_{\mathbb{T}^3_x} - \frac{\tilde{\theta} r_2}{1+ \varepsilon \tilde{\theta}} + \theta^b r_2 + \frac{1}{2} \varepsilon r_2^2+ \frac{ (\theta^b -  \tilde{\theta})\tilde{\theta}}{1+ \varepsilon \tilde{\theta}}   \ dx \\
            &+ \varepsilon \frac{3}{2} \int_{\mathbb{T}^3_x} C(\|\varepsilon \tilde{\theta}\|_{L^{\infty}(dxdt)})\tilde{\theta}^3  -  C(\|\varepsilon \theta^b + \varepsilon^2 r_2\|_{L^{\infty}(dxdt)})  ( \theta^b + \varepsilon r_2)^3,
        \end{aligned}
    \end{equation}
    and $C(\|\varepsilon \tilde{\theta}\|_{L^{\infty}(dxdt)})$ along with $ C(\|\varepsilon \theta^b + \varepsilon^2 r_2\|_{L^{\infty}(dxdt)}) $ are constants depending only on $\|\varepsilon \tilde{\theta}\|_{L^{\infty}(dxdt)}$ and $\|\varepsilon \theta^b + \varepsilon^2 r_2\|_{L^{\infty}(dxdt)}$ respectively.

    Moreover, for the rest terms, we have
    \begin{equation}\label{R10}
        \begin{aligned}
            \frac{1}{\varepsilon^2 } \int_{\mathbb{T}^3_x} \rho_{f_{\varepsilon}}   \frac{|u_{f_{\varepsilon}}- \varepsilon \tilde{u}|^2}{2(1+\varepsilon \tilde{\theta})} \ dx
            = \frac{1}{2}\int_{\mathbb{T}^3_x} (\tilde{u}-u^b)^2 \ dx + R_{10},
        \end{aligned}
    \end{equation}
    where
    \begin{equation}
        \begin{aligned}
            R_{10}= \varepsilon \frac{1}{2}\int_{\mathbb{T}^3_x} \frac{(u^b - \tilde{u})(r_1 - \tilde{\theta}) + \varepsilon r_1}{1+ \varepsilon \tilde{\theta}}(1+ \varepsilon \rho^b) \ dx .
        \end{aligned}
    \end{equation}
    Combing \eqref{R8}, \eqref{R9} and \eqref{R10} completes the proof.
\end{proof}

\begin{proof}[\textbf{Proof of Lemma \ref{D_q_equivlence}}]
    Note that
    \begin{equation} \label{dissipation_temp1}
        \begin{aligned}
            \int^t_0 \int_{\mathbb{T}^3_x} \frac{1}{\varepsilon^4} D(f_{\varepsilon}) dxds =
            \frac{1}{4} \frac{1}{\varepsilon^4}\int^t_0 \int_{\mathbb{T}^3_x} \int_{\mathbb{R}^3_v}  \int_{\mathbb{R}^3_v}  \int_{\mathbb{S}^2}  \left(f^{\prime}_{\varepsilon 1} f^{\prime}_{\varepsilon }  - f_{\varepsilon 1} f_{\varepsilon} \right) \log \frac{f^{\prime}_{\varepsilon 1} f^{\prime}_{\varepsilon } }{f_{\varepsilon 1} f_{\varepsilon}} b\ d\sigma dv dv_1 dx ds, \\
            =\frac{1}{4} \frac{1}{\varepsilon^2} \int^t_0 \int_{\mathbb{T}^3_x} \int_{\mathbb{R}^3_v}  \int_{\mathbb{R}^3_v}  \int_{\mathbb{S}^2} q_{\varepsilon} \left( \log {f^{\prime}_{\varepsilon 1} f^{\prime}_{\varepsilon } }- \log{f_{\varepsilon 1} f_{\varepsilon}} \right)MM_1b\ d\sigma dv dv_1 dx ds,
        \end{aligned}
    \end{equation}
    and that
    \begin{equation}
        \begin{aligned}
            \log(f^{\prime}_{\varepsilon 1} f^{\prime}_{\varepsilon })
            &= \log(M^{\prime }_1 M^{\prime} + \varepsilon M^{\prime }_1 M^{\prime} g^{\prime}_{\varepsilon 1}+ \varepsilon M^{\prime }_1 M^{\prime} g^{\prime}_{\varepsilon}+ \varepsilon^2 M^{\prime }_1 M^{\prime}g^{\prime}_{\varepsilon 1}g^{\prime}_{\varepsilon })\\
            &=\log M^{\prime }_1 M^{\prime}  + \varepsilon g^{\prime}_{\varepsilon 1} + \varepsilon g^{\prime}_{\varepsilon} + \varepsilon^2 g^{\prime}_{\varepsilon } g^{\prime}_{\varepsilon 1} - \frac{1}{2} \left( \varepsilon g^{\prime}_{\varepsilon 1} + \varepsilon g^{\prime}_{\varepsilon} + \varepsilon^2 g^{\prime}_{\varepsilon } g^{\prime}_{\varepsilon 1}\right)^2 + r_3,\\
            \log(f_{\varepsilon 1} f_{\varepsilon })
            &= \log(M_1 M + \varepsilon M_1 M g_{\varepsilon 1}+ \varepsilon M_1 M g_{\varepsilon}+ \varepsilon^2 M_1 Mg_{\varepsilon 1}g_{\varepsilon })\\
            &=\log M_1 M  + \varepsilon g_{\varepsilon 1} + \varepsilon g_{\varepsilon} + \varepsilon^2 g_{\varepsilon } g_{\varepsilon 1} - \frac{1}{2} \left( \varepsilon g_{\varepsilon 1} + \varepsilon g_{\varepsilon} + \varepsilon^2 g_{\varepsilon } g_{\varepsilon 1}\right)^2 + r_4,\\
        \end{aligned}
    \end{equation}
    where
    \begin{equation}
        \begin{aligned}
            r_3 = \frac{\varepsilon^3}{1+ \tau\left( \varepsilon g^{\prime}_{\varepsilon 1} + \varepsilon g^{\prime}_{\varepsilon} + \varepsilon^2 g^{\prime}_{\varepsilon } g^{\prime}_{\varepsilon 1}\right)}  \left( g^{\prime}_{\varepsilon 1} + g^{\prime}_{\varepsilon} + \varepsilon g^{\prime}_{\varepsilon } g^{\prime}_{\varepsilon 1}\right)^3,\\
            r_4 = \frac{\varepsilon^3}{1+ \tau\left( \varepsilon g_{\varepsilon 1} + \varepsilon g_{\varepsilon} + \varepsilon^2 g_{\varepsilon } g_{\varepsilon 1}\right)}  \left(g_{\varepsilon 1} + g_{\varepsilon} + \varepsilon g_{\varepsilon } g_{\varepsilon 1}\right)^3,
        \end{aligned}
    \end{equation}
    for some $\tau \in [0,1]$.

    Therefore, we have
    \begin{equation}\label{dissipation_temp2}
        \begin{aligned}
            \frac{1}{\varepsilon^2} \left( \log {f^{\prime}_{\varepsilon 1} f^{\prime}_{\varepsilon } }- \log{f_{\varepsilon 1} f_{\varepsilon}} \right) &= q_{\varepsilon} -\frac{1}{2} q_{\varepsilon} \left(\varepsilon g^{\prime}_{\varepsilon 1} + \varepsilon g^{\prime}_{\varepsilon} + \varepsilon^2 g^{\prime}_{\varepsilon } g^{\prime}_{\varepsilon 1} + \varepsilon g^{\prime}_{\varepsilon 1} + \varepsilon g^{\prime}_{\varepsilon} + \varepsilon^2 g^{\prime}_{\varepsilon } g^{\prime}_{\varepsilon 1} \right)\\
             &+ \frac{1}{\varepsilon^2}r_3 - \frac{1}{\varepsilon^2}r_4.
        \end{aligned}
    \end{equation}

    Combing \eqref{dissipation_temp1} and \eqref{dissipation_temp2} yields that
    \begin{equation}
        \int^t_0 \int_{\mathbb{T}^3_x} \frac{1}{\varepsilon^4} D(f_{\varepsilon})\ dxds = \frac{1}{4}  \int^t_0 \int_{\mathbb{T}^3_x} \langle \langle q_{\varepsilon}^2  \rangle \rangle \ dx ds + R_{11}.
    \end{equation}
    
    \end{proof}

\begin{proof}[\textbf{Proof of Lemma \ref{q_asymptotics}}]
    Using \eqref{perturbed_Boltzmann}, we have
    \begin{equation}\label{appendix_temp1}
        \begin{aligned}
            \langle \varepsilon \partial_t g_{\varepsilon}, \hat{A} \rangle +  \langle v\mdot \nabla_x g_{\varepsilon}, \hat{A} \rangle  = \langle\!\langle \hat{A} q_{\varepsilon} \rangle\!\rangle .\\
        \end{aligned}
    \end{equation}
    Since, by \eqref{local_viscosity_and_heat_conductivity} and \eqref{v_dot_nabla_g} we have 
    \begin{equation}\label{appendix_temp2}
        \begin{aligned}
            \langle v\mdot \nabla_x g_{\varepsilon}, \hat{A} \rangle& = \langle v\mdot \nabla_x \Lpre g_{\varepsilon}, \hat{A} \rangle + \langle v\mdot \nabla_x \Loth g_{\varepsilon}, \hat{A} \rangle\\
            &= \mu \sigma(u^b) + \langle v\mdot \nabla_x \Loth g_{\varepsilon}, \hat{A} \rangle,
        \end{aligned}
    \end{equation}
    \eqref{appendix_temp1} and \eqref{appendix_temp2} then leads to 
    \begin{equation}
        \begin{aligned}
            \langle\!\langle \hat{A} q_{\varepsilon} \rangle\!\rangle - \mu \sigma(u^b) = \langle v \mdot \nabla_x \Loth g_{\varepsilon} , \hat{A}\rangle + \langle \varepsilon \partial_t g_{\varepsilon} , \hat{A}\rangle.
        \end{aligned}
    \end{equation}

    And similarly,
    \begin{equation}
        \begin{aligned}
            \langle\!\langle \hat{B} q_{\varepsilon} \rangle\!\rangle -\frac{5}{2}\kappa \nabla_x \theta^b = \langle v \mdot \nabla_x \Loth g_{\varepsilon} , \hat{B}\rangle + \langle \varepsilon \partial_t g_{\varepsilon} , \hat{B}\rangle.
        \end{aligned}
    \end{equation}
\end{proof}
\bibliography{ref.bib}
        \bibliographystyle{amsplain}
\end{document}